
\def\marker{\>\hbox{${\vcenter{\vbox{
    \hrule height 0.4pt\hbox{\vrule width 0.4pt height 6pt
    \kern6pt\vrule width 0.4pt}\hrule height 0.4pt}}}$}\>}

\documentclass[12pt]{article}

\usepackage[left=2cm,right=2cm,top=3cm,bottom=3cm]{geometry}

\usepackage{amsmath, amssymb, latexsym, amsthm}
\usepackage{fullpage}
\usepackage{mathrsfs, eufrak}
\usepackage[hang,flushmargin]{footmisc}

\usepackage{tikz}
\usetikzlibrary{shapes}

\newtheorem{theorem}{Theorem} 
\newtheorem{theorem*}{Theorem} 
\newtheorem{proposition}[theorem]{Proposition} 

\newtheorem{corollary}[theorem]{Corollary}
\newtheorem{lemma}{Lemma}
\newtheorem{conjecture}{Conjecture}

\theoremstyle{definition}

\newtheorem{observation}{Observation}

\theoremstyle{remark}
\newtheorem{claim}{Claim}

{\end{enumerate}}




\newcommand{\abs}[1]{\left\lvert{#1}\right\rvert}

\newcommand{\fl}[1]{\lfloor #1 \rfloor}

\newcommand{\f}{\mathcal{F}}
\newcommand{\C}{\mathcal{C}}
\newcommand{\R}{\mathcal{R}}

\newcommand{\M}{\mathcal{M}}

\DeclareMathOperator{\sat}{sat}

\makeatletter
\let\@fnsymbol\@arabic
\makeatother

\title{On Edge-Colored Saturation Problems}
\author{
Michael Ferrara \thanks{Department of Mathematical and Statistical Sciences, University of Colorado Denver, Denver, CO. \texttt{\{michael.ferrara,florian.pfender,eric.2.sullivan\}@ucdenver.edu}}
\and
Daniel Johnston \thanks{Department of Mathematics, Grand Valley State University, Allendale, MI. \texttt{johndan@gvsu.edu}}
\and
Sarah Loeb \thanks{Department of Mathematics, College of William and Mary, Williamsburg, VA. \texttt{sjloeb@wm.edu}}
\and
Florian Pfender \footnotemark[1]
\and
Alex Schulte \thanks{Department of Mathematics, Iowa State University, Ames, IA. \texttt{aschulte@iastate.edu}}
\and
Heather C. Smith \thanks{School of Mathematics, Georgia Institute of Technology, Atlanta, GA. \texttt{heather.smith@math.gatech.edu}} 
\and
Eric Sullivan \footnotemark[1]
\and
Michael Tait \thanks{Department of Mathematical Sciences, Carnegie Mellon University, Pittsburgh, PA. \texttt{mtait@cmu.edu}}
\and
Casey Tompkins \thanks{Alfr\'ed R\'enyi Institute of Mathematics, Hungarian Academy of Sciences. \texttt{ctompkins496@gmail.com}}
}

\begin{document}

\maketitle

\begin{abstract}
Let $\mathcal{C}$ be a family of edge-colored graphs. A $t$-edge colored graph $G$ is $(\mathcal{C}, t)$-saturated if $G$ does not contain any graph in $\mathcal{C}$ but the addition of any edge in any color in $[t]$ creates a copy of some graph in $\mathcal{C}$. Similarly to classical saturation functions, define $\mathrm{sat}_t(n, \mathcal{C})$ to be the minimum number of edges in a $(\mathcal{C},t)$ saturated graph. Let $\mathcal{C}_r(H)$ be the family consisting of every edge-colored copy of $H$ which uses exactly $r$ colors.

In this paper we consider a variety of colored saturation problems. We determine the order of magnitude for $\mathrm{sat}_t(n, \mathcal{C}_r(K_k))$ for all $r$, showing a sharp change in behavior when $r\geq \binom{k-1}{2}+2$. A particular case of this theorem proves a conjecture of Barrus, Ferrara, Vandenbussche, and Wenger. We determine $\mathrm{sat}_t(n, \mathcal{C}_2(K_3))$ exactly and determine the extremal graphs. Additionally, we document some interesting irregularities in the colored saturation function.

{\bf Keywords: saturation; edge-coloring} 
\end{abstract}


\renewcommand{\thefootnote}{\arabic{footnote}}

\baselineskip18pt

\section{Edge-colored Saturation Problems}

Given a family of graphs ${\mathcal F}$, a graph $G$ is \textit{${\mathcal F}$-saturated} if no $F\in {\mathcal F}$ is a subgraph of $G$, but for any $e\in E(\overline{G})$, some $F\in{\mathcal F}$ is a subgraph of $G+e$.  The minimum number of edges in an $n$-vertex ${\mathcal F}$-saturated graph is the {\it saturation number} of ${\mathcal F}$ and is denoted $\sat(n,{\mathcal F})$.  If ${\mathcal F}=\{F\}$, then we instead say that $G$ is {\it $F$-saturated}, and write $\sat(n,F)$.  
The saturation function was introduced by Erd\H os, Hajnal, and Moon \cite{EHM} and graph and hypergraph saturation problems have received considerable attention since that time.  We refer the interested reader to the dynamic survey of Faudree, Faudree, and Schmitt~\cite{FFS}, which contains a number of results and open problems.

In this paper, we are interested in saturation problems in the setting of edge-colored graphs, which was first introduced by Hanson and Toft \cite{HT} in 1987.  Let $[t]=\{1,2,\dots,t\}$.  A function $f: E(G) \rightarrow [t]$ is a $t$-edge-coloring of a graph $G$. A injective function yields a rainbow edge coloring.
Given a family $\C$ of edge-colored graphs, we say that a $t$-edge-colored graph $G$ is {\it $(\C,t)$-saturated} if $G$ contains no member of $\C$ as a (colored) subgraph, but for any  edge $e\in E(\overline{G})$ and any color $i\in [t]$, the addition of $e$ to $G$ in color $i$ creates some member of $\C$.
In line with classical saturation functions, we are interested in $\sat_t(n,\C)$, the minimum number of edges in a $(\C,t)$-saturated graph of order $n$. 

In this paper, we will primarily be interested in families of edge-colored graphs.  Given a graph $H$ and a fixed palette of $t$ colors, define $\M(H)$ to be the family of monochromatic edge colorings of $H$, $\R(H)$ denote the family of rainbow edge-colorings of $H$, and $\C_k(H)$ denote the set of edge colorings of $H$ using exactly $k$ of the $t$ colors.  Going forward, when considering $\sat_t(n,\C)$ for any of these functions, we will implicitly assume that we color these families from $[t]$.   Hanson and Toft \cite{HT} determined $\sat_t(n,\f)$ where $\f$ consists of monochromatic copies of $K_{t_i}$ in color $i$ for $1\le i\le t$, and also introduced a related conjecture that we will discuss briefly in the conclusion.

\subsection{Rainbow Subgraphs}  Barrus, Ferrara, Vandenbussche and Wenger~\cite{BFVW} introduced $\sat_t(n,\R(H))$ and considered several problems with a significant focus on the asymptotic behavior of this parameter for different choices of $H$.  As discussed in \cite{BFVW}, it is straightforward to show that for any graph $H$, $\sat_t(n,\M(H))=O(n)$.  This is not the case, however, for rainbow target graphs.  For instance, Barrus \textit{et al.} show that $\sat_t(n,\R(K_{1,r}))=\Theta(n^2)$ for $r\ge 2$, and gave two more general results that imply $$c_1 \frac{n \log(n)}{\log\log(n)} \le \sat_t(n,\R(K_k)) \le c_2 n\log(n)$$ for $k\ge 3$. They also conjectured the following.  

\begin{conjecture}\label{conj:rainbow_clique}
For $k\ge 3$ and $t\ge{k\choose 2}$, $$\sat_t(n,\R(K_k))=\Theta(n\log n).$$ 
\end{conjecture}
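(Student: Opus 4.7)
Since the upper bound $\sat_t(n,\R(K_k))=O(n\log n)$ is already in \cite{BFVW}, my plan focuses on improving the lower bound from $\Omega(n\log n/\log\log n)$ to $\Omega(n\log n)$. The strategy is to show that every vertex in a $(\R(K_k),t)$-saturated graph $G$ has degree $\Omega(\log n)$; a handshake-style summation then yields $|E(G)|\geq \tfrac{1}{2}n\cdot\Omega(\log n)=\Omega(n\log n)$.

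I would first record the local witness condition implied by saturation: if $uv\notin E(G)$, then for each color $i\in[t]$ there is a $(k-2)$-set $S\subseteq N(u)\cap N(v)$ such that $G[S\cup\{u,v\}]-uv$ is rainbow and its palette avoids $i$. Since a single such $S$ carries a fixed palette of $\binom{k}{2}-1$ colors, it serves exactly the $t-\binom{k}{2}+1$ colors outside that palette. Hence every non-edge demands at least $\lceil t/(t-\binom{k}{2}+1)\rceil$ witness sets, and in particular forces $|N(u)\cap N(v)|\geq k-2$.

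Next, pick a vertex $v_0$ of minimum degree $d$, and to each non-neighbor $u$ of $v_0$ attach the \emph{signature} consisting of the list of witness $(k-2)$-subsets of $N(v_0)$ together with the colored patterns they carry. Distinct non-neighbors must, in general, have distinguishable signatures, so $n-1-d$ is bounded by the number of admissible signatures, which sits inside a universe of size at most $\binom{d}{k-2}\cdot(t+1)^{\binom{k}{2}}$. A direct pigeonhole at this point only recovers $d=\Omega(\log n/\log\log n)$, reproducing the bound from \cite{BFVW}.

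The hard part, and the main technical obstacle, is closing the $\log\log n$ gap. My proposal is to leverage the absence of any rainbow $K_k$ in $G$ itself in a recursive fashion: if many non-neighbors of $v_0$ share witness subsets with overlapping color patterns inside $N(v_0)$, then splicing a witness for $u_1$ with a witness for $u_2$ (for instance through a $u_1 u_2$-path inside $N(v_0)$, using whichever edges already lie in $G$) should already expose a rainbow $K_k$ in $G$, contradicting rainbow-$K_k$-freeness. Passing from $N(v_0)$ to a sub-neighborhood with a prescribed color pattern should reduce to a saturation instance on fewer vertices but with a richer effective palette, enabling a self-improving degree-lifting induction. Making this induction quantitative without re-introducing a $\log\log n$ loss --- and handling the regime where $t$ is much larger than $\binom{k}{2}$ so that each witness serves many colors simultaneously --- is where I expect most of the technical work to lie.
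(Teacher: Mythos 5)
Your proposal does not close: the step you yourself label ``the hard part'' --- upgrading the pigeonhole bound from $\Omega(\log n/\log\log n)$ to $\Omega(\log n)$ --- is exactly the content of the theorem, and the sketch you give for it is not an argument that can be checked. Moreover, the route has concrete problems before that point. First, the claim that distinct non-neighbors of a fixed minimum-degree vertex $v_0$ must have distinguishable signatures inside $N(v_0)$ is false: saturation only guarantees that for each non-adjacent pair $x,y$ there is \emph{some} vertex $w$ with $xw,yw\in E(G)$ receiving distinct colors (take the rainbow 2-path inside the created copy of $K_k$), and that $w$ need not be your chosen $v_0$; two non-neighbors of $v_0$ that are adjacent to each other, or are separated through another center, can look identical from $v_0$. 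Second, the target statement itself (minimum degree $\Omega(\log n)$) is stronger than what is needed and is not clearly true: the local witness condition only forces $|N(u)\cap N(v_0)|\ge k-2$ and roughly $\lceil t/(t-\binom{k}{2}+1)\rceil$ witness sets per non-edge, and the correct global conclusion (see below) is only that $\sum_v t^{-d(v)}$ is bounded, which still permits a small number of low-degree vertices; a handshake argument from a false or unproven minimum-degree claim does not yield the lower bound.

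The paper's proof is global rather than local, and that is the idea you are missing. For \emph{every} vertex $v$ one forms the complete $t$-partite graph $G_v$ on the color classes $\Gamma_1(v),\dots,\Gamma_t(v)$ of $N(v)$; the rainbow-cherry consequence of saturation (for each non-edge $xy$ there is a rainbow 2-path $x$--$v$--$y$) says that $\bigcup_v G_v$ is a $t$-partite cover of $\overline{G}$ whose total part-size is exactly $\sum_v d(v)=2e(G)$. A Katona--Szemer\'edi-type matrix argument then shows that any $t$-partite cover of $K_n$ has total part-size at least $n\log_t n$ (the rows of an expanded incidence matrix are pairwise distinct, giving $\sum_v t^{-d_v}\le 1$ and then AM--GM), and a cheap lemma transfers from covering $\overline{G}$ to covering $K_n$ at an additive cost of $e(G)+n$. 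Combining gives $e(G)\ge\left(\frac13-o(1)\right)n\log n/\log t$, which together with the $O(n\log n)$ upper bound from \cite{BFVW} proves the conjecture (for fixed $t$). The aggregation over all centers simultaneously, replacing your per-vertex signature count by a single global counting inequality, is what eliminates the $\log\log n$ loss.
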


In Section \ref{sec:asymptotics}, we prove some broader results about $\sat_t(n,\C_k(H))$.
As a consequence of our result there, we prove the following.  

\begin{theorem}\label{thm:clique_asymp}
Let $k\ge 3$, and $t\ge c$ be fixed.  

\begin{enumerate}
\item If $c\ge {k-1\choose 2}+2$, then $\sat_t(n,\C_c(K_k))=\Theta(n\log n).$

\item If $c\le {k-1\choose 2}+1$, then $\sat_t(n,\C_c(K_k))=\Theta(n).$
\end{enumerate}
\end{theorem}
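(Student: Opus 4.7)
The plan is to derive Theorem~\ref{thm:clique_asymp} as a specialization of the broader dichotomy for $\sat_t(n,\C_c(H))$ established in Section~\ref{sec:asymptotics}. That general theorem identifies a threshold on $c$, depending on a subgraph parameter of $H$, below which $\sat_t(n,\C_c(H))$ is linear in $n$ and at or above which it is $\Theta(n\log n)$. For $H=K_k$, the controlling parameter equals $|E(K_{k-1})|=\binom{k-1}{2}$, since $K_{k-1}$ is the only (up to isomorphism) $(k-1)$-vertex induced subgraph of $K_k$; the dichotomy therefore lands precisely at $c=\binom{k-1}{2}+2$, as the theorem claims.

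To flesh this out, I would independently verify the two directions in the special case $H=K_k$. For Part 2 ($c\le\binom{k-1}{2}+1$), the lower bound $\Omega(n)$ follows from a minimum-degree argument: any $(\C_c(K_k),t)$-saturated graph $G$ has $\delta(G)\ge k-2$, since adding any non-edge at $v$ must produce a $K_k$ containing $v$, which in turn must provide $k-2$ further neighbors of $v$ inside $G$. For the upper bound, I would color a modest modification of the split graph $K_{k-2}\vee\overline{K_{n-k+2}}$, which has $O(n)$ edges and is canonically $K_k$-saturated. Since $c-1\le\binom{k-1}{2}=|E(K_{k-1})|$, the color palette available on a $K_{k-1}$ is large enough to be tuned so that for every non-edge $uv$ and every color $i\in[t]$, at least one $K_k$ through $uv$ uses exactly $c$ colors.

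For Part 1 ($c\ge\binom{k-1}{2}+2$), the upper bound $O(n\log n)$ is obtained by adapting the recursive construction of Barrus, Ferrara, Vandenbussche, and Wenger~\cite{BFVW} that achieves $\sat_t(n,\R(K_k))=O(n\log n)$, modifying their $t$-edge-coloring so that every newly created $K_k$ uses exactly $c$ colors rather than all $\binom{k}{2}$. The matching $\Omega(n\log n)$ lower bound is the substantive direction and follows from the general lower bound of Section~\ref{sec:asymptotics}; in the specific case of $K_k$, the intuition is that once $c>\binom{k-1}{2}+1$, a single $K_{k-1}$ inside $G$ cannot carry the full color profile needed to create a $c$-colored $K_k$ upon edge insertion, and an iterated argument across vertex neighborhoods accumulates the $\log n$ factor.

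The main obstacle I anticipate is the upper bound construction in Part 2. When $t>c$, a single $K_k$ through a fixed non-edge $uv$ cannot accommodate every color choice on $uv$: a surrounding palette of $c-1$ colors requires $uv$ to receive a fresh color for the $K_k$ to be $c$-colored, while a surrounding palette of $c$ colors requires $uv$ to receive a repeated color. One must therefore engineer several candidate $K_k$'s through each non-edge with distinct color palettes, and the challenge is to do so without exceeding $O(n)$ edges. Localizing the construction to bounded-size ``books'' built on the split-graph skeleton, and choosing their colorings to cover all possible color classes for new edges, should suffice.
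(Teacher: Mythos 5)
Your overall architecture is the same as the paper's: Part 1's lower bound is deduced from the general $\Omega(n\log n/\log t)$ result of Section~\ref{sec:asymptotics}, Part 2's upper bound comes from a colored split-graph construction with several palettes per non-edge (the paper takes one copy of $K_{k-2}\vee I$ for each of the $\binom{t}{c-1}$ choices of a $(c-1)$-subset $S\subseteq[t]$, all sharing the independent set $I$, then greedily completes outside $I$ --- essentially your ``books''), and the two easy bounds (linear lower bound; $O(n\log n)$ upper bound inherited from the BFVW construction) are treated as routine. The one step where your reasoning does not actually do the work is the application of the general lower bound at the threshold. Theorem~\ref{thm:2-colored-path} is not a dichotomy governed by a ``subgraph parameter''; it is a lower bound whose hypothesis is that every edge $uv$ of every member of the family lies on a rainbow path of length $2$ inside that member. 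So Part 1 requires verifying: if $K_k$ is colored with exactly $c\ge\binom{k-1}{2}+2$ colors and $uv$ is any edge, then some vertex $x$ has $ux$ and $xv$ differently colored. The paper's Corollary~\ref{cor:nlogn} does this by counting --- at most $\binom{k-2}{2}$ colors appear on $K_k-\{u,v\}$ and at most one on $uv$, so at least $c-\binom{k-2}{2}-1\ge k-1$ colors appear on the $2(k-2)$ edges from $\{u,v\}$ to the remaining vertices, whereas if every $x$ saw $ux$ and $xv$ in the same color these edges would carry at most $k-2$ colors. Your substitute heuristic (the controlling parameter is $\lvert E(K_{k-1})\rvert$ since $K_{k-1}$ is the only $(k-1)$-vertex subgraph) correctly explains why the Part 2 construction works up to $c=\binom{k-1}{2}+1$, but it does not establish the rainbow-$2$-path condition above that point; and your sketch of the lower-bound mechanism as an ``iterated argument across vertex neighborhoods'' is not how Theorem~\ref{thm:2-colored-path} is proved (it is a $t$-partite covering reduction plus a Katona--Szemer\'edi-style counting argument). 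The missing verification is only a few lines, but it is the single content-bearing step in deducing Part 1 from the general theorem, so it must appear explicitly.
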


Independent of our work here, Gir\~{a}o, Lewis and Popielarz \cite{GLP} determined the asymptotics of $\sat_t(n,\R(H))$ for every connected graph $H$ without a pendant edge, and as a consequence also resolve Conjecture \ref{conj:rainbow_clique} in the affirmative.  Furthermore, Kor\'{a}ndi \cite{K} recently showed that $$\sat_t(n,\R(K_k))=\Theta_k \left(\frac{n\log n}{\log t} \right)$$ and gave sharp asymptotics (in $t$ and $n$) for $\sat_t(n,\R(K_3)).$ The techniques utilized across all three papers are quite diverse, and provide an interesting spectrum of possible approaches to problems of this type.  

\subsection{Irregularities}

It has been well-documented (see, for instance \cite{HT,FFS}) that the classical (uncolored) saturation function is not monotone in $n$ or with respect to subgraph and family inclusion.  That is, there is a graph $H$ such that $\sat(n,H)\le\sat(n+1,H)$ for infinitely many $n$, distinct graphs $H_1\subseteq H_2$ such that $\sat(n,H_2)\le \sat(n,H_1)$, and distinct families $\f_1\subseteq\f_2$ such that $\sat(n,\f_2)\le\sat(n,\f_1)$.  Before continuing on to our main results, we give some examples of similar irregular behaviors for $\sat_t(n,\C_k(H))$.  

Recall that it was shown in \cite{BFVW} that $\sat_t(n,\R(K_{1,k}))=\Theta(n^2)$ for all $t\geq k \ge 3$, and further that if $T$ is any tree with $k\geq 4$ vertices that is not a star, then $\sat_t(n,\R(T))=O(n)$ when $t\geq \binom{k-1}{2}$.  This immediately establishes that $H_1\subseteq H_2$ does not necessarily imply that $\sat_t(n,\R(H_1))\le \sat_t(n,\R(H_2))$.  More interestingly, in our opinion, is the following result, which establishes that $\sat_t(n,\C_k(H))$ is not monotone (increasing or decreasing) in $k$.  Recall that $\M(H) = \C_1(H)$ and $\R(H)=\C_{|E(H)|}(H).$

\begin{theorem}\label{prop:non-mon}
For $t\ge 3$ and $n$ sufficiently large, $$\sat_t(n,\C_2(K_{1,3}))<\sat_t(n,\C_1(K_{1,3}))<\sat_t(n,\C_3(K_{1,3})).$$
\end{theorem}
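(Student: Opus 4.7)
The plan is to prove the two strict inequalities by giving explicit constructions together with matching lower bounds for $\sat_t(n,\C_j(K_{1,3}))$ for $j=1,2$, and then invoking the $\Theta(n^2)$ behavior of rainbow star saturation from \cite{BFVW}.

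For the upper bound on $\sat_t(n,\C_2(K_{1,3}))$, I will use the following construction $G$: designate a hub vertex $v_0$ and join $v_0$ to each of the other $n-1$ vertices in color $1$, and on $V\setminus\{v_0\}$ place a perfect matching in each color $c\in\{2,\dots,t\}$ (with a minor adjustment leaving one vertex deficient when $n-1$ is odd). Then $v_0$ is monochromatic in color $1$ and every other vertex has exactly one edge of each color, so at no vertex do three incident edges span exactly two colors; hence $G$ contains no $\C_2(K_{1,3})$. To check saturation, adding a non-edge $u_iu_j$ in any color $c$ leaves $u_i$ with two edges of color $c$ and one of each other color, producing the desired $\C_2(K_{1,3})$ at $u_i$. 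This yields $\sat_t(n,\C_2(K_{1,3}))\le (t+1)(n-1)/2$.

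For the lower bound on $\sat_t(n,\C_1(K_{1,3}))$, note that each color class $G_i$ must have maximum degree at most $2$, while for every non-edge $uv$ of $G$ some endpoint must have $G_i$-degree at least $2$. Consequently the set $S_i=\{v:d_{G_i}(v)\le 1\}$ forms a clique in $G$. Since $G$ has maximum degree at most $2t$, we obtain $|S_i|\le 2t+1$, so $e(G_i)\ge n-|S_i|\ge n-2t-1$, and summing over $i$ yields $e(G)\ge t(n-2t-1)$. For $t\ge 3$ and $n$ sufficiently large, $(t+1)(n-1)/2<t(n-2t-1)$, giving the first strict inequality.

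For the second inequality, I will build a $(\C_1(K_{1,3}),t)$-saturated graph with $O(n)$ edges by taking the edge-disjoint union of $t$ copies of the classical extremal $K_{1,3}$-saturated graph $C_{n-2}+K_2$, one in each color; such an edge-disjoint packing exists for $n$ sufficiently large. This gives $\sat_t(n,\C_1(K_{1,3}))\le t(n-1)$. On the other hand, Barrus, Ferrara, Vandenbussche and Wenger \cite{BFVW} established that $\sat_t(n,\C_3(K_{1,3}))=\sat_t(n,\R(K_{1,3}))=\Theta(n^2)$, so for $n$ sufficiently large $\sat_t(n,\C_1(K_{1,3}))<\sat_t(n,\C_3(K_{1,3}))$.

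The hard part will be the detailed case analysis verifying $\C_2(K_{1,3})$-saturation (including the parity correction when $n-1$ is odd) and producing the $t$ edge-disjoint copies of $C_{n-2}+K_2$ needed for the $\C_1$ upper bound; neither step is deep, but both require some care with small cases.
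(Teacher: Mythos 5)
Your argument is correct, and although it has the same overall skeleton as the paper's proof --- an explicit construction bounding $\sat_t(n,\C_2(K_{1,3}))$ from above, a lower bound on $\sat_t(n,\C_1(K_{1,3}))$, and the $\Theta(n^2)$ result of \cite{BFVW} for the rainbow star --- both quantitative ingredients are genuinely different, and they must be used together because your two bounds are calibrated to each other. The paper's $\C_2$ construction packs $t$ perfect matchings in distinct colors (plus a leftover $K_r$), giving strictly fewer than $tn/2$ edges, whereas your hub-plus-matchings graph has $(t+1)(n-1)/2 > tn/2$ edges; you therefore could not close the first inequality against the paper's lower bound of $tn/2$ for $\C_1(K_{1,3})$, which it proves by a global degree-sum argument conditioned on a minimum-degree vertex (using that a non-edge $uv$ forces $d(u)\ge 2(t-\lfloor d(v)/2\rfloor)$). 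You compensate with a stronger lower bound obtained by decomposing into color classes: $\Delta(G_i)\le 2$, the set $S_i$ of vertices of $G_i$-degree at most $1$ is a clique in $G$ and hence has at most $2t+1$ vertices, so $e(G_i)\ge n-2t-1$ and $e(G)\ge t(n-2t-1)$. This is asymptotically tight (it matches your union of $t$ edge-disjoint copies of the disjoint union $C_{n-2}\cup K_2$, which also replaces the paper's citation of \cite{BFVW} for the $O(n)$ upper bound on $\sat_t(n,\C_1(K_{1,3}))$ with an explicit construction) and is roughly twice the paper's bound, so your route actually pins down $\sat_t(n,\C_1(K_{1,3}))=tn-O(1)$; the paper's construction, on the other hand, is more economical for $\C_2$ and generalizes to $\C_{s-1}(K_{1,s})$. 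The details you defer --- the parity adjustment in the matchings and the existence of $t$ edge-disjoint copies of $C_{n-2}\cup K_2$ for $n$ large --- are routine and do not hide any difficulty.
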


First recall $\sat_t(n,\C_1(K_{1,3}))=O(n)$ and $\sat_t(n,\C_3(K_{1,3}))=\Theta(n^2)$~\cite{BFVW}. The theorem then follows from two propositions.  

\begin{proposition}\label{C_1(K_{1,3})} For $n \ge 2t\geq 6$, $\sat_t(n,\C_{1}(K_{1,3})) \ge tn/2$. 
\end{proposition}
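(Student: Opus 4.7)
The plan is to translate the saturation hypothesis into a local covering property and then close a double-counting argument with Cauchy--Schwarz. Let $G$ be a $(\C_1(K_{1,3}), t)$-saturated graph. Since $G$ contains no monochromatic $K_{1,3}$, every vertex has degree at most $2$ in each color. For any non-edge $uv$ and any color $i$, adding $uv$ in color $i$ must create a monochromatic $K_{1,3}$; this star lies in color $i$ (only color $i$ changes) and contains the new edge, so its center is $u$ or $v$, and that endpoint already had color-$i$ degree exactly $2$ in $G$. Call a vertex \emph{full} in color $i$ when it has color-$i$ degree $2$; then for every non-edge $uv$ and every color $i$, at least one of $u, v$ is full in color $i$.

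Let $f(v)$ denote the number of colors in which $v$ is full, so $\deg(v) \geq 2f(v)$ because each full color contributes two edges at $v$. Double-counting triples of the form (non-edge $uv$, color $i$, full endpoint in $\{u,v\}$) against the covering property yields
\[
\sum_{v \in V(G)} f(v)\bigl(n - 1 - \deg(v)\bigr) \;\geq\; t\!\left(\binom{n}{2} - |E(G)|\right).
\]
Replacing $f(v)$ on the left by the larger quantity $\deg(v)/2$, expanding, and invoking the Cauchy--Schwarz bound $\sum_v \deg(v)^2 \geq 4|E(G)|^2/n$ produces the quadratic inequality
\[
4|E(G)|^2 \;-\; 2n(n-1+t)\,|E(G)| \;+\; tn^2(n-1) \;\leq\; 0.
\]

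Its discriminant factors as $4n^2(n-1-t)^2$, and the two roots in $|E(G)|$ turn out to be exactly $tn/2$ and $n(n-1)/2$. Since the leading coefficient is positive and the hypothesis $n \geq 2t$ ensures $tn/2 \leq n(n-1)/2$, the value $|E(G)|$ must lie between these roots, so in particular $|E(G)| \geq tn/2$, as required. The main obstacle, as I see it, is identifying the correct object to double-count: the naive count of pairs $(v,i)$ with $v$ full in color $i$ carries too little information, but weighting each such pair by the number of non-edges at $v$ couples cleanly with Cauchy--Schwarz to produce a quadratic whose roots are precisely the two natural extremal edge counts; the remaining algebra is routine.
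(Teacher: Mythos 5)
Your proof is correct, and it takes a genuinely different route from the paper's. Both arguments start from the same local observation: since $G$ has no monochromatic $K_{1,3}$, every vertex has color-$i$ degree at most $2$, and for every non-edge $uv$ and every color $i$ at least one endpoint must already be ``full'' (color-$i$ degree exactly $2$). The paper converts this into the pairwise degree condition $d(u)\ge 2(t-\lfloor d(v)/2\rfloor)$ for non-adjacent $u,v$, then splits into cases on the minimum degree $\ell$ and bounds the degree sum by comparing vertices inside and outside $N[v]$ for a minimum-degree vertex $v$; this is elementary but requires the small algebraic manipulation $nt+(t-\ell)[n-2(\ell+1)]$ and the hypothesis $n\ge 2t$ to finish. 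You instead count the triples (non-edge, color, full endpoint) globally, bound $f(v)\le \deg(v)/2$ (valid since the color classes at $v$ are disjoint and $n-1-\deg(v)\ge 0$), and close with Cauchy--Schwarz; I verified that the resulting quadratic $4e^2-2n(n-1+t)e+tn^2(n-1)\le 0$ has discriminant $4n^2(n-1-t)^2$ and roots exactly $tn/2$ and $n(n-1)/2$, and the hypothesis $n\ge 2t$ (so $t\le n-1$) orders the roots correctly. Your version buys a cleaner, case-free global argument that also exhibits the complete graph $e=\binom{n}{2}$ as the other root of the extremal quadratic, which is a nice structural bonus; the paper's version is more pedestrian but avoids any convexity or Cauchy--Schwarz input.
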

\begin{proof}
Suppose $G$ is saturated for $\C_{1}(K_{1,3})$. If we have a non-edge $uv$ in $G$, then adding $uv$ to $G$ in any color produces a monochromatic $K_{1,3}$ at $u$ or at $v$. Since $v$ can be saturated for at most $\fl{d(v)/2}$ colors, there are at least $t-\fl{d(v)/2}$ colors in which we could add edge $uv$ without producing a monochromatic $K_{1,3}$ at $v$.  If $d(u) < 2(t-\fl{d(v)/2})$, then there is some color in which $uv$ can be added that produces neither a monochromatic $K_{1,3}$ at $u$ or at $v$. Thus if $uv \notin E(G)$, then $d(u) \ge 2(t-\fl{d(v)/2})$. 

If $\delta(G) \ge t$, then we get that $e(G) \ge tn/2$ so we may assume that $\delta(G) = \ell < t$. In this case, rather than directly compute the number of edges in $G$, we consider the degree sum. We will condition on whether or not a vertex is in the closed neighborhood of a fixed vertex of minimum degree. To this end, let $v$ be a vertex with $d(v) = \ell$. For $u \notin N[v]$, we have $d(u) \ge 2(t-\fl{\ell/2})$ and, for all vertices, we have $d(u) \ge \ell$.
\begin{align*}
\sum_{w \in V(G)} d(w) &= \sum_{w \notin N[v]} d(w) + \sum_{w \in N[v]} d(w) \\
&\ge (n-(\ell+1))\cdot (2t-\ell) + (\ell+1)\cdot \ell \\
&= n(2t-\ell) - 2(\ell+1)(t-\ell)\\
&=nt + (t-\ell)[n-2(\ell+1)]
\end{align*}
For $\ell \le t-1$, 
the degree sum is at least $nt+(n-2t)$. 
Thus when $n \ge 2t$, we find $e(G) \ge nt/2$. 
\end{proof}

Next, we give an upper bound on $\sat_t(n,\C_2(K_{1,3}))$ by providing a more general saturated graph for $\C_{s-1}(K_{1,s})$. This suffices to complete the proof of Theorem \ref{prop:non-mon}.

\begin{proposition}\label{C_s(K_{1,s-1})} For $n$ sufficiently large with respect to $t$ and $t \ge s-1$, $\sat_t(n,\C_{s-1}(K_{1,s})) < tn/2$. 
\end{proposition}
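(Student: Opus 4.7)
The plan is to construct an explicit $(\C_{s-1}(K_{1,s}),t)$-saturated graph $G$ on $n$ vertices with strictly fewer than $tn/2$ edges. The bulk of $G$ will be a $t$-regular bipartite graph $H$ on $n-r$ vertices, where $r \in \{1,2\}$ has the same parity as $n$ (so that $(n-r)/2$ is an integer) and $n \ge 2t+2$ (so that $(n-r)/2 \ge t$). I will equip $H$ with a proper $t$-edge-coloring in $[t]$, which exists by K\"{o}nig's edge-coloring theorem; an explicit example is the circulant on bipartition $\{a_0,\dots,a_{m-1}\} \sqcup \{b_0,\dots,b_{m-1}\}$ with $m=(n-r)/2$ and edges $a_i b_{i+j\,(\mathrm{mod}\,m)}$ colored $j+1$ for $0 \le j \le t-1$. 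To extend $H$ to $n$ vertices, I then add either a single isolated vertex (if $r=1$) or two vertices joined by a single edge of arbitrary color (if $r=2$). The resulting edge count is $(n-1)t/2$ or $(n-2)t/2+1$, both strictly less than $tn/2$ since $t \ge 2$.

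Next I would check that $G$ contains no member of $\C_{s-1}(K_{1,s})$: at every vertex of $H$ the $t$ incident edges carry pairwise distinct colors, so any $s$-edge star centered there uses $s$ distinct colors (which requires $s \le t$; when $t=s-1$ the vertex's degree is only $s-1$, too small for a $K_{1,s}$ at all). The extras have degree at most $1$ and host no $K_{1,s}$.

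The core step is the saturation check. Let $uv$ be a non-edge of $G$ and $c \in [t]$ a color. At least one endpoint lies in $H$, since the extras are either unique ($r=1$) or joined by an edge ($r=2$). Say $u \in H$; after adding $uv$ in color $c$, vertex $u$ has $t+1$ incident edges, namely two of color $c$ (the new edge and $u$'s original $c$-edge in $H$) and one in each of the remaining $t-1$ colors of $[t] \setminus \{c\}$. Since $t-1 \ge s-2$ (this is where the hypothesis $t \ge s-1$ enters), I may select $s-2$ of these other edges in distinct colors from $[t] \setminus \{c\}$; together with the two $c$-edges they form a $K_{1,s}$ rooted at $u$ whose edges span exactly $s-1$ distinct colors, i.e., a colored copy in $\C_{s-1}(K_{1,s})$. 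The only nontrivial ingredient is the existence of the proper $t$-edge-coloring, which is classical, and the hypothesis $t \ge s-1$ is used precisely once and only for bookkeeping, so no genuine obstacle arises.
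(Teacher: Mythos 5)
Your construction is essentially identical to the paper's: a properly $t$-edge-colored $t$-regular graph on $n-r$ vertices (i.e., a packing of $t$ perfect matchings, each a color class) together with a $K_r$ on the $r\in\{1,2\}$ leftover vertices, and the same saturation argument (the new edge creates a repeated color at its endpoint in $H$, yielding a $K_{1,s}$ with exactly $s-1$ colors). The proof is correct; you simply spell out the verification that the paper leaves implicit.
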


\begin{proof}
Let $n = 2k+r$ where $r \in \{1,2\}$. Let $H$ be a graph on $2k$ vertices produced by packing $t$ perfect matchings onto $V(H)$, with each matching in a distinct color. Let $G$ be the graph that is the disjoint union of $H$ and $K_r$, where edges in the $K_r$ are colored arbitrarily. Then $G$ is a $\C_{s-1}(K_{1,s})$-saturated graph; any added edge has at least one endpoint in $H$ and results in an $(s-1)$-colored $K_{1,s}$ centered at that vertex. 
\end{proof}

It would be interesting to determine if there exist other graphs exhibiting such unusual behavior.  For instance, for any pattern $x_1,\dots,x_{k-1}$ of $``\uparrow"$ and $``\downarrow"$ symbols, does there exist a graph $H$ with $k$ edges such that $\sat_t(n,\C_i(H))\ge \sat_t(n,\C_{i+1}(H))$ if and only if $x_i = \uparrow$?

\section{Asymptotics}\label{sec:asymptotics}

In this section, we prove the following general result that implies Conjecture \ref{conj:rainbow_clique}.  Together with Theorem \ref{thm:linear_clique}, this result implies Theorem \ref{thm:clique_asymp}.

\begin{theorem}\label{thm:2-colored-path}
Let $\mathcal{H}$ be a family of edge-colored graphs where for each $H\in \mathcal{H}$, for each edge $uv\in E(H)$ there is a rainbow path with $2$ edges connecting $u$ to $v$ in $H$. Then for any integer $t$, $t\geq 3$, we have 
\[
\left(\frac{1}{3} - o(1)\right)\frac{n\log n}{\log t} \leq \sat_t(n,\mathcal{H}).
\]
\end{theorem}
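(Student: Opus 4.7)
The plan is to prove the bound by first extracting a clean combinatorial consequence of $(\mathcal{H},t)$-saturation, then combining a pointwise counting inequality with an iterative/amplification step that produces the $\log n / \log t$ factor.

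The first step is a reduction to a rainbow $2$-path condition. Because every $H \in \mathcal{H}$ admits, for each edge $uv \in E(H)$, a rainbow path of length two from $u$ to $v$ in $H$, if $G$ is $(\mathcal{H},t)$-saturated and $uv$ is a non-edge of $G$, then adding $uv$ in any color creates a copy of some $H \in \mathcal{H}$; the rainbow $2$-path from $u$ to $v$ in that copy must consist of two edges of $G$ with distinct colors. Hence every non-edge $uv$ satisfies: there exists $w \in N(u)\cap N(v)$ with $c(uw) \neq c(wv)$.

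The second step is a pointwise local inequality. For each vertex $v \in V(G)$, write $d_c(w)$ for the number of neighbors of $w$ joined to $w$ by an edge of color $c$. Each of the $n-1-d(v)$ non-neighbors of $v$ is reached by at least one rainbow $2$-path from $v$ through some $w\in N(v)$, and each such $w$ serves as midpoint for at most $d(w)-d_{c(vw)}(w)$ non-neighbors. This gives
\[
n-1-d(v) \;\le\; \sum_{w\in N(v)} \bigl(d(w)-d_{c(vw)}(w)\bigr).
\]
Summing over $v$ and using the Cauchy-Schwarz estimate $\sum_c d_c(w)^2 \ge d(w)^2/p(w)$, where $p(w) \le t$ is the size of the palette at $w$, yields $\sum_w d(w)^2 \ge (1-o(1))n^2$ and hence $\Delta(G) = \Omega(n^2/m)$.

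The third, and main, step is iterative amplification. A single application of the above gives only $m = \Omega(n)$, far weaker than the required $\Omega(n\log n/\log t)$. The idea is to iterate: the high-degree vertex just identified contributes many edges, and passing to a suitable substructure retains enough of the $2$-path condition to repeat the argument. The crucial observation is that each vertex has a palette of at most $t$ colors, so a rainbow-$2$-path ``expansion'' from a seed can multiply its reachable set by at most a factor of $t$ per step, forcing $\Omega(\log n / \log t)$ iterations to exhaust $V(G)$. Summing edges across the iterations produces the $\Omega(n \log n / \log t)$ bound, and the constant $1/3$ is obtained by a careful charging scheme accounting for the three vertices of a rainbow $2$-path.

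The main obstacle is making the iteration rigorous. Deleting a vertex can destroy the rainbow $2$-paths whose unique witness passed through that vertex, so naive induction on $n$ fails. A satisfactory argument must either maintain a weighted potential that absorbs the loss of witnesses at each step, or replace vertex deletion by a global averaging / charging argument exploiting the bounded palette without disrupting the $2$-path condition. Securing both the $1/\log t$ scaling and the explicit constant $1/3$ through this bookkeeping is the technical heart of the proof.
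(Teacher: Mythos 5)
Your first step is correct and matches the paper's Observation: saturation forces every non-edge $uv$ to have a common neighbor $w$ with $c(uw)\neq c(wv)$. But the argument stops being a proof precisely where the theorem starts being nontrivial. Your second step, even granting the Cauchy--Schwarz estimate, yields only $\sum_w d(w)^2 = \Omega(n^2)$ and hence a linear lower bound on $e(G)$, as you acknowledge. The entire $\log n/\log t$ factor is supposed to come from the third step, and that step is not an argument: you describe what a satisfactory argument ``must'' do (maintain a potential, or find a charging scheme) without exhibiting one, and you concede that the naive induction fails because deleting a vertex destroys witnesses. Moreover, the heuristic you offer for where $\log_t n$ comes from --- that a ``rainbow-$2$-path expansion from a seed multiplies its reachable set by at most $t$ per step, forcing $\Omega(\log n/\log t)$ iterations to exhaust $V(G)$'' --- does not correspond to any correct mechanism here: the $2$-path condition already forces $G$ to have diameter at most $2$, so there is nothing to iterate; a BFS-style expansion bottoms out after one step.

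The actual source of the logarithm in the paper is quite different and is worth internalizing. One packages the $2$-path condition as a covering statement: for each vertex $v$, the complete $t$-partite graph $G_v$ on the color classes $\Gamma_1(v),\dots,\Gamma_t(v)$ of $N(v)$ covers exactly those non-edges whose rainbow $2$-path has midpoint $v$, so $\{G_v\}_{v}$ is a $t$-partite cover of $\overline{G}$ of total weight $\sum_v d(v)=2e(G)$. A Katona--Szemer\'edi-type \emph{addressing} argument then shows any $t$-partite cover of $K_n$ has weight at least $n\log n/\log t$: each vertex receives an address in $\{1,\dots,t,*\}^{|\mathcal{F}|}$ recording which part of each covering graph it lies in, the covering property forces $\sum_v t^{-d_v}\le 1$ (the addresses, after expanding the $*$'s, must be distinct), and AM--GM converts this to $\sum_v d_v\ge n\log_t n$. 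A short lemma ($f(\overline{H})\ge f(K_n)-e(H)-n$, proved by covering $H$ itself with $n$ stars) transfers the bound from $K_n$ to $\overline{G}$, and the constant $1/3$ falls out of the final bookkeeping between $2e(G)$ and $e(G)$, not from charging the three vertices of a path. As it stands, your proposal identifies the right starting point but leaves the theorem's essential content unproved, and the route you sketch for closing the gap would not work.
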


Before we proceed with the proof of this theorem, we make the following simple observation that will be useful going forward.

\begin{observation}\label{observation rainbow}
If $\mathcal{H}$ is as in Theorem \ref{thm:2-colored-path} and if $G$ is $(\mathcal{H},t)$-saturated, then for any nonedge $uv$ in $G$ there is a 2-edge path with two colors connecting $u$ to $v$ in $G$.
\end{observation}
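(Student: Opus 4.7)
The plan is to invoke the saturation condition just once and then read off what the structural hypothesis on $\mathcal{H}$ provides. Fix a non-edge $uv$ in $G$ and any color $i \in [t]$. By the definition of $(\mathcal{H},t)$-saturation, the edge-colored graph obtained by adding $uv$ to $G$ in color $i$ contains a copy of some $H \in \mathcal{H}$. Since $G$ itself contains no member of $\mathcal{H}$, this copy of $H$ must use the new edge $uv$; in particular, $uv$ is an edge of $H$ (in the colored sense) with color $i$.

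Now I apply the hypothesis of Theorem \ref{thm:2-colored-path} to that edge $uv$ inside the copy of $H$: there is a rainbow $2$-edge path from $u$ to $v$ in $H$, that is, a vertex $w$ together with edges $uw$ and $wv$ whose two colors are distinct. The only small point to check is that this path does not secretly reuse the newly added edge. Since it is a path of length $2$ from $u$ to $v$, its internal vertex $w$ must be distinct from both $u$ and $v$, so neither $uw$ nor $wv$ coincides with $uv$.

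Consequently $uw$ and $wv$ are edges of $G$ itself and they carry the same two distinct colors as in the copy of $H$. This is exactly a $2$-edge path connecting $u$ to $v$ in $G$ whose edges use two different colors, which is what the observation asserts. There is no genuine obstacle in the argument; the statement is essentially a direct translation of the saturation hypothesis, with the only substantive ingredient being the simple remark that a length-$2$ path between $u$ and $v$ necessarily avoids the edge $uv$.
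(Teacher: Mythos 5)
Your argument is correct and is exactly the reasoning the paper has in mind; the paper simply states the observation without proof, treating it as immediate from the saturation condition and the structural hypothesis on $\mathcal{H}$. Your one substantive check --- that the internal vertex of the length-$2$ path is distinct from $u$ and $v$, so the path lives entirely in $G$ --- is the right detail to verify, and nothing further is needed.
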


We use this observation to prove Theorem \ref{thm:2-colored-path} via a reduction to a specific covering problem.
Let $\mathcal{F}$ be a family of complete $t$-partite graphs with $U^F_1, \cdots, U^F_t$ the partite sets of $F\in \mathcal{F}$. We say that $\mathcal{F}$ is a $t$-partite cover of a graph $H$ if $E(H) \subseteq \bigcup_{F\in \mathcal{F}} E(F)$. Define
\[
f(H) = \min_{\mathcal{F}} \sum_{F \in \mathcal{F}} |U^F_1|+ \cdots + |U^F_t|,
\]
where the minimum is taken over all $\mathcal{F}$ a $t$-partite cover of $H$.

Let $\mathcal{H}$ be a family of $t$-edge-colored graphs where for each $H\in \mathcal{H}$ and each edge $uv\in E(H)$ there is a 2-edge path with two colors connecting $u$ to $v$ in $H$. Assume $G$ is $(\mathcal{H},t)$-saturated. We create a $t$-partite cover of the complement of $G$. For each vertex $v$ and $1\leq i\leq t$, let $\Gamma_i(v)$ be the set of vertices adjacent to $v$ in $G$ with edge color $i$. For each vertex $v$, let $G_v$ be the complete $t$-partite graph on $V(G)$ with partite sets $\Gamma_1(v),\cdots, \Gamma_t(v)$. By Observation \ref{observation rainbow}, if $x$ and $y$ are not adjacent in $G$, then there is a rainbow path of length $2$ between them. If the vertex in the middle of this path is $v$, then $G_v$ contains the edge $xy$. Therefore, $\bigcup_{v\in V(G)} G_v$ is a $t$-partite cover of the complement of $G$. 

Next we note that 
\[
\sum_v |\Gamma_1(v)| + \cdots + |\Gamma_t(v)| = \sum_v d(v) = 2e(G).
\]
Therefore, we have that if $G$ is $(\mathcal{H},t)$-saturated, then 
\begin{equation}\label{lower bound by cover}
f(\overline{G}) \leq 2e(G).
\end{equation}

We need the following lemma, which we modify from a result of Katona and Szem\'eredi \cite{KS}.

\begin{lemma}\label{covering complete graphs}
\[
f(K_n) \geq \frac{n\log n}{\log t}.
\]

\end{lemma}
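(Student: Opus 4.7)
The plan is to translate the statement into a combinatorial coding problem. Given a $t$-partite cover $\mathcal{F}=\{F_1,\ldots,F_m\}$ of $K_n$, assign to each vertex $v\in V(K_n)$ the membership string $s_v\in\{0,1,\ldots,t\}^m$ with $s_v[\ell]=j$ if $v\in U_j^{F_\ell}$, and $s_v[\ell]=0$ otherwise. The total cost $\sum_{F\in\mathcal{F}}(|U_1^F|+\cdots+|U_t^F|)$ equals $N:=\sum_v k_v$, where $k_v$ is the number of nonzero coordinates of $s_v$; and the covering condition translates into the following separating property: for every pair $u\neq v$, some coordinate $\ell$ has $s_u[\ell]$ and $s_v[\ell]$ both nonzero and distinct. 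It therefore suffices to prove $N\geq n\log n/\log t$.

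The main step is to produce, from the $s_v$, a prefix-free $t$-ary code $\{c_v\}_{v\in V(K_n)}$ with $|c_v|\leq k_v$ for every $v$. Given such a code, Kraft's inequality combined with the convexity of $x\mapsto t^{-x}$ (Jensen's inequality) yields
\[
1 \;\geq\; \sum_v t^{-|c_v|} \;\geq\; \sum_v t^{-k_v} \;\geq\; n\,t^{-N/n},
\]
which rearranges to $N\geq n\log_t n=n\log n/\log t$. To construct the code I would build a rooted $t$-ary tree whose leaves are $V(K_n)$: recursively, at each internal node with active set $S$, the separating property supplies a coordinate $\ell$ whose nonzero values partition $S$ into classes $C_1^\ell,\ldots,C_t^\ell$. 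Create one child per nonempty class and recurse. A vertex landing in $C_j^\ell$ extends its code by the symbol $j$ and ``spends'' one of its nonzero coordinates of $s_v$.

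The main obstacle is the null class $C_0^\ell$ at each recursive step: vertices with $s_v[\ell]=0$ do not naturally descend along any of the $t$ branches, yet they must be routed somewhere without inflating their codes beyond $k_v$. One way to handle this is to merge $C_0^\ell$ into a nonempty child and show, by an amortized accounting, that these free descents do not force the depth above $k_v$; another is to modify the scheme so that null vertices effectively skip the current level, using that their string is unchanged on positions other than $\ell$. As an alternative (and closer in spirit to Katona--Szemer\'edi's original counting argument), one can bypass the tree altogether and compare the entropy $\log n$ of a uniformly chosen vertex with $\log t$ times the expected number of informative (nonzero) coordinates, using the separating property to control the contribution of the null coordinates. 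The base-$t$ rather than base-$(t+1)$ logarithm in the conclusion reflects that only the nonzero values carry distinguishing information between two vertices.
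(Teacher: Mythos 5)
Your reduction to the inequality $\sum_v t^{-k_v}\le 1$ and the concluding convexity step are exactly right and match the paper's endgame (the paper uses AM--GM where you invoke Jensen; it is the same computation, and $k_v$ is the paper's $d_v$). The gap is in how you propose to establish that inequality. You derive it from the existence of a prefix-free $t$-ary code with $|c_v|\le k_v$, but by Kraft's inequality and its converse the existence of such a code is \emph{equivalent} to $\sum_v t^{-k_v}\le 1$, so the reduction makes no progress unless the code is actually constructed --- and the construction is precisely where your argument stops. The obstacle you name (the null class $C_0^\ell$) is real, and neither of your suggested remedies is worked out: merging $C_0^\ell$ into a nonempty child hands those vertices a code symbol without consuming a nonzero coordinate of $s_v$, so there is no a priori reason the resulting depth stays at most $k_v$; letting them ``skip the level'' destroys prefix-freeness between a skipping vertex and the vertices that did descend at that node. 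The entropy alternative you mention is likewise only a sketch. As written, the central step is missing.

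The fix is to drop the prefix-code formalism entirely, which is what the paper does. The separating property says that for distinct $u,v$ some coordinate is nonzero and distinct in both strings; hence the ``cylinder sets'' $B_v=\{x\in[t]^m:\ x_\ell=s_v[\ell]\ \text{whenever}\ s_v[\ell]\ne 0\}$ are pairwise disjoint subsets of $[t]^m$ (a common element would force $s_u[\ell]=s_v[\ell]$ at every coordinate where both are nonzero). Since $|B_v|=t^{m-k_v}$, disjointness gives $\sum_v t^{m-k_v}\le t^m$, i.e.\ $\sum_v t^{-k_v}\le 1$, with no tree, no ordering of coordinates, and no handling of null classes required. The paper phrases exactly this as expanding each row of a vertex-by-cover matrix into all completions of its $*$ entries and checking that all resulting rows are distinct. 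With that replacement, the rest of your write-up goes through verbatim.
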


\begin{proof}
Let $\mathcal{F} = \{F_j\}_{j=1}^\ell$ be a $t$-partite cover of $K_n$ where $F_j$ has partite sets $U_1^j,\cdots, U_t^j$. Create a matrix $M$ with the rows indexed by $V(K_n)$ and the columns indexed by $\mathcal{F}$ as follows:
\[
M_{ij} = \begin{cases}
k & \mbox{if $i\in U_k^j$,}\\
* & \mbox{if $i$ is not in any partite set of $F_j$.}
\end{cases}
\]
For each vertex $v$, let $d_v$ be the number of entries which are not $*$ in the row corresponding to $v$ (i.e. $d_v$ is the number of $t$-partite graphs in $\mathcal{F}$ which use $v$). Then in the row corresponding to $v$ there are $|\mathcal{F} |- d_v$ entries with $*$. We create a new matrix $M'$ where for each row $v$ we replace it with $t^{|\mathcal{F}| - d_v}$ rows putting all possible replacements of $*$ with elements from $\{1,\ldots, t\}$ and leaving all other entries the same.

We claim that each row in $M'$ is distinct. To see this, if a pair of rows are in the $t^{|\mathcal{F}| - d_v}$ rows which correspond to the same vertex $v$, then the replacements of $*$ with $\{1,\ldots, t\}$ will be different in at least one position. If a pair of rows in $M'$ correspond to distinct vertices $u$ and $v$, then because $\mathcal{F}$ is a $t$-partite cover of $K_n$, there is an $F\in \mathcal{F}$ where $u$ and $v$ are in different partite sets of $F$ and therefore there is a column in $M'$ that distinguishes the two rows. 

Since the total number of distinct rows is at most $t^{|\mathcal{F}|}$, we have 
\[
\sum_{v} t^{|\mathcal{F}| - d_v} \leq t^{|\mathcal{F}|}
\]
and therefore
\[
\sum_v \frac{1}{t^{d_v}} \leq 1.
\]

Now the AM-GM inequality implies 
\[
\sqrt[n]{\prod_v \frac{1}{t^{d_v}}} \leq \frac{1}{n} \sum_v \frac{1}{t^{d_v}} \leq \frac{1}{n}.
\]
Rearranging gives $\sum_v d_v \geq n\log_t n$. Noting that $\sum_v d_v = \sum_{j=1}^{\ell} |U_1^j| + \cdots |U_t^j|$ finishes the proof.
\end{proof}

Next we need to show that covering $K_n$ with $t$-partite graphs is not much different from covering the complement of a sparse graph with $t$-partite graphs.

\begin{lemma}\label{covering incomplete graphs}
Let $H$ be a graph on $n$ vertices. Then 
\[
f(\overline{H}) \geq f(K_n) - (e(H) + n).
\]
\end{lemma}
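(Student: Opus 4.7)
The plan is to extend any near-optimal $t$-partite cover of $\overline{H}$ into a $t$-partite cover of $K_n$ by tacking on a small number of extra $t$-partite graphs that cover exactly the edges of $H$. The extra cost of these added graphs should be at most $e(H) + n$; rearranging then yields the stated inequality.

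First, I would fix an optimal $t$-partite cover $\mathcal{F}$ of $\overline{H}$, so $\sum_{F \in \mathcal{F}} (|U_1^F| + \cdots + |U_t^F|) = f(\overline{H})$. To cover $K_n$, the only edges that might be missing from $\bigcup_{F \in \mathcal{F}} E(F)$ are those of $H$. The key idea is to decompose $E(H)$ economically into stars. I would do this by fixing an arbitrary orientation of $H$: for each vertex $v$, let $N^+(v)$ be its out-neighborhood, so that $\sum_v |N^+(v)| = e(H)$.

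For each vertex $v$ with $N^+(v) \neq \emptyset$, I would introduce a complete $t$-partite graph $S_v$ whose partite sets are $\{v\}$, $N^+(v)$, and $t-2$ empty sets. Each $S_v$ contributes $1 + |N^+(v)|$ to the weight, and together the $S_v$ cover every edge of $H$ (each edge $uv$ of $H$ is covered by $S_u$ or $S_v$, depending on the orientation). Hence $\mathcal{F} \cup \{S_v : N^+(v) \neq \emptyset\}$ is a $t$-partite cover of $K_n$, whose total weight is at most
\[
f(\overline{H}) + \sum_v \bigl(1 + |N^+(v)|\bigr) \leq f(\overline{H}) + n + e(H).
\]
Since $f(K_n)$ is by definition the minimum weight of such a cover, the desired inequality $f(\overline{H}) \geq f(K_n) - (e(H) + n)$ follows immediately.

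The only subtle point — and the one I would double-check — is that the definition of a $t$-partite cover permits complete $t$-partite graphs with some empty parts (equivalently, complete bipartite graphs viewed as degenerate $t$-partite graphs). This is consistent with how the graphs $G_v$ are used in the reduction preceding the lemma, where vertex $v$ lies in none of the partite sets of $G_v$. Beyond that, the argument is essentially bookkeeping: orient $H$, add one star per vertex, and sum the costs.
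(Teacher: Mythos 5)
Your proof is correct and takes essentially the same approach as the paper: the paper also augments an optimal cover of $\overline{H}$ with one star per vertex (using an arbitrary vertex ordering in place of your arbitrary orientation, which amounts to the same thing), with empty parts filling out the $t$-partition exactly as you describe, for a total added weight of $e(H)+n$.
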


\begin{proof}
Let $\mathcal{F}$ be an $t$-partite covering of $\overline{H}$ with weight $f(\overline{H})$. We will construct a family of $t$-partite graphs that covers $H$ with weight at most $e(H) + n$, which will certify that 
\[
f(K_n) \leq f(\overline{H}) + (e(H) + n).
\]

Order the vertices of $H$ arbitrarily as $v_1,\cdots, v_n$. For $1\leq i\leq n$ define a $t$-partite graph $H_i$ with partite sets $U^i_1, \cdots, U^i_t$ where 
\begin{align*}
U^i_1 &= \{v_i\},\\
U^i_2 & = \{v_j :  v_i\sim v_j, j>i\},\\
U^i_k & = \emptyset \quad \mbox{(for $3\leq k\leq t$)}.
\end{align*}
Then $\{H_i\}_{i=1}^n$ partitions the edge set of $H$ into stars, and 
\[\sum_{i} |U^i_1| + \cdots + |U^i_t| = e(H) + n.\]
\end{proof}

We are now ready to complete the proof of Theorem \ref{thm:2-colored-path}

\begin{proof}[Proof of Theorem \ref{thm:2-colored-path}]
Let $G$ be $(\mathcal{H},t)$-saturated and assume for a contradiction that $e(G) < \frac{n\log n}{3\log t} - n/2$. Then
$\bigcup_{v\in V(G)} G_v.$
is a $t$-partite cover of $\overline{G}$, implying that $f(\overline{G}) \leq 2e(G) < \frac{2n\log n}{3\log t} - n$. This, together with Lemma \ref{covering incomplete graphs} implies $f(K_n) < \frac{n\log n}{\log t}$, which contradicts Lemma \ref{covering complete graphs}.
\end{proof}

The following corollary to Theorem 4 implies Conjecture \ref{conj:rainbow_clique}, and resolves Conjecture 2 in \cite{GLP}.  This conjecture was also affirmed in \cite{K}, where the focus was strictly the rainbow setting.

\begin{corollary}\label{cor:nlogn}
If $c\ge{k-1\choose 2}+2$ and $t\ge c$, then $$\sat_t(n,\C_c(K_k))\ge \left(\frac{1}{3}-o(1)\right)\frac{n\log n}{\log t}.$$
\end{corollary}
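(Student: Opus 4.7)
The plan is to obtain this corollary as an immediate consequence of Theorem \ref{thm:2-colored-path} applied to $\mathcal{H}=\C_c(K_k)$. The one thing to verify is that this family meets the hypothesis of that theorem: for every $c$-edge-coloring $H$ of $K_k$ and every edge $uv$ of $K_k$, some vertex $w\notin\{u,v\}$ satisfies $c(uw)\neq c(wv)$.

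I would check this by contrapositive. Fix an edge-colored $K_k$ and an edge $uv$, and assume no such $w$ exists. Then each of the $k-2$ two-paths $uwv$ is monochromatic, so the $2(k-2)$ edges incident with $\{u,v\}$ other than $uv$ use at most $k-2$ distinct colors. Adding the one color on $uv$ and at most $\binom{k-2}{2}$ colors on the $K_{k-2}$ induced on $V(K_k)\setminus\{u,v\}$, the total number of colors appearing in $H$ is at most
\[
(k-2) + 1 + \binom{k-2}{2} \;=\; \binom{k-1}{2}+1,
\]
contradicting $c\ge\binom{k-1}{2}+2$. Hence the rainbow $2$-path hypothesis holds for every $H\in\C_c(K_k)$ and every edge $uv$, and Theorem \ref{thm:2-colored-path} delivers the claimed lower bound since $t\ge c\ge 3$.

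The only real content is the arithmetic pinpointing the threshold; no further obstacle remains, because the lower bound on $\sat_t$ is already packaged inside Theorem \ref{thm:2-colored-path}. It is worth noting that the threshold $\binom{k-1}{2}+2$ is tight for this reduction: coloring the $k-2$ two-paths $uwv$ each monochromatically with $k-2$ new colors, giving fresh distinct colors to the $\binom{k-2}{2}$ edges of $K_k-\{u,v\}$, and one more color to $uv$, produces a coloring of $K_k$ with exactly $\binom{k-1}{2}+1$ colors in which $u$ and $v$ are not joined by any rainbow $2$-path.
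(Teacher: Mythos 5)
Your proposal is correct and matches the paper's proof in substance: both verify the rainbow-two-path hypothesis of Theorem \ref{thm:2-colored-path} by the same color count (at most $\binom{k-2}{2}$ colors on $K_k-\{u,v\}$, one on $uv$, and the remaining colors forced onto the cross edges), with yours phrased as a contrapositive and the paper's phrased directly as ``at least $c-\binom{k-2}{2}-1\ge k-1$ colors on the $2(k-2)$ cross edges forces a bichromatic path $uxv$.'' Your closing remark on tightness is also consistent with the paper, which establishes sharpness of the threshold separately in Theorem \ref{thm:linear_clique}.
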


\begin{proof}
Let $c\ge{k-1\choose 2}+2$ and consider an edge $uv$ in $G$, an edge-colored $K_k$ with exactly $c$ colors. There are at most ${k-2\choose 2}$ colors on the edges in $G-\{u,v\}$, and at most one additional color on $uv$.  This leaves at least $c-{k-2\choose 2}\ge k-1$ colors on the edges with one endpoint in $\{u,v\}$ and one endpoint in $V(G)-\{u,v\}$, implying that there is some vertex $x$ such that the edges of $uxv$ receive distinct colors.  We can therefore apply Theorem \ref{thm:2-colored-path} to $\sat_t(n,\C_c(K_k)).$
\end{proof}

As we demonstrate next, the bound of $c\ge {k-1\choose 2}+2$ is sharp.  

\begin{theorem}\label{thm:linear_clique}
If $c\le {k-1\choose 2}+1$ and $t\ge c$ are fixed, then $\sat_t(n,\C_c(K_k)) = O(n).$ 
\end{theorem}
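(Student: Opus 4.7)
The plan is to exhibit, for every admissible $k$, $c$, $t$, an explicit $(\C_c(K_k),t)$-saturated edge-colored graph $G$ on $n$ vertices with $O_{k,c,t}(n)$ edges, which yields the upper bound.

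I would take $V(G)=A\sqcup B$, where $A$ is a ``core'' of constant size $N=N(k,c,t)$ and $B$ is an independent set of size $n-N$. Each $b\in B$ is joined to every vertex of a fixed $(k-1)$-subset $A'\subseteq A$, and each edge $ba$ (with $a\in A'$) is colored by a function $\phi(a)$ that depends only on its endpoint $a\in A'$. This forces every $b\in B$ to play an interchangeable role and gives
$e(G)=(k-1)(n-N)+\binom{N}{2}=(k-1)n+O(1)$ edges. The nontrivial piece is the edge-coloring of $G[A]$ and the choice of $\phi$.

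The coloring of $A$ must simultaneously achieve (i) no $K_k\subseteq G$ uses exactly $c$ colors, and (ii) for every non-edge $uv$ (necessarily $u,v\in B$) and every color $i\in[t]$, one of the $k-1$ candidate cliques $\{u,v\}\cup(A'\setminus\{a_\ell\})$ uses exactly $c$ colors once $uv$ is added in color $i$. The only $K_k$'s of $G$ to worry about for (i) are those inside $A$ and those of the form $A'\cup\{b\}$, and since all $b\in B$ see $A$ identically, the latter form a single colored structure whose palette can be tuned. For (ii), the hypothesis $c\le\binom{k-1}{2}+1$ is critical: a rainbow $K_{k-1}$ already uses $\binom{k-1}{2}$ colors, so a near-rainbow coloring on $A'$ puts each candidate $K_k$ within one color of $c$, and the added edge $uv$ in color $i$ can adjust the count by one in the correct direction (either introducing $i$ as a new color or matching a color already present) for a suitable choice of $a_\ell$.

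The main obstacle is the combinatorial design of the core coloring and $\phi$ achieving (i) and (ii) simultaneously for every $i\in[t]$. The parameter $N$ may need to be taken noticeably larger than $k-1$ when $t$ is small, so that $A'$ and the rest of $A$ together provide enough $K_{k-2}$-sub-cliques realizing the $(c-1)$- and $(c-2)$-color patterns needed to absorb each $i\in[t]$. The boundary case $c=1$ reduces to a Hanson--Toft-style gadget construction: take $t$ disjoint monochromatic cliques $S_1,\ldots,S_t$ with $|S_i|=k-2$ and $S_i$ entirely in color $i$, and join every external vertex to $S_i$ via color-$i$ edges. The remaining cases are handled by analogous gadget constructions, and verification of (i) and (ii) reduces to case analysis on the color distribution of the at most $k-1$ candidate extensions. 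Once the coloring is in place, the linear edge bound is immediate.
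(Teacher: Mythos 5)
Your high-level plan (a constant-size colored core joined to a large independent set $B$, giving $O(n)$ edges) is the same as the paper's, but the concrete skeleton you commit to --- each $b\in B$ joined only to a \emph{fixed} $(k-1)$-set $A'$, with the color of $ba$ depending only on $a$ --- does not work in general, and the part you defer (``the combinatorial design of the core coloring and $\phi$'') is exactly where it breaks. Since $B$ is independent, any $K_k$ through an added edge $uv$ with $u,v\in B$ must take its other $k-2$ vertices from $N(u)\cap N(v)=A'$, so its color set is $C_T\cup\{i\}$ with $C_T\subseteq D:=\mathrm{colors}(G[A'])\cup\phi(A')$. Take $t=c$. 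If $G[A']$ is complete, then $A'\cup\{b\}$ is a $K_k$ of $G$, so avoiding $\C_c(K_k)$ forces $|D|\le c-1$; hence $|C_T|\le c-1$ for every $T$, and to reach $c$ colors you need $|C_T|=c-1$, i.e.\ $C_T=D$, and $i\notin D$. Every color $i\in D$ is then unhandled, so the graph is not saturated (for $c\ge 2$). Dropping an edge of $G[A']$ to void that constraint only helps when $\binom{k-1}{2}$ items can cover all $c$ colors; for $c=\binom{k-1}{2}+1$, $t=c$, $k\ge 4$ one checks the two surviving $(k-2)$-cliques of $A'$ can each miss only one color and most colors $i$ still fail. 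Your hedge that ``$N$ may need to be taken noticeably larger so that the rest of $A$ provides more $K_{k-2}$-sub-cliques'' cannot rescue this as stated, because vertices of $A\setminus A'$ are not common neighbors of $u$ and $v$ and so cannot appear in any $K_k$ containing both; you would have to increase the degree of the $B$-vertices, i.e.\ change the construction. You also leave unaddressed the saturation of non-edges between $B$ and $A\setminus A'$.

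The paper resolves precisely this difficulty by giving each vertex of the independent set $I$ degree $\binom{t}{c-1}(k-2)$ rather than $k-1$: for every $(c-1)$-subset $S\subseteq[t]$ it builds a copy of $K_{k-2}\vee I$ whose edges use exactly the colors of $S$ (obtained by coloring $K_{k-1}$ with exactly $c-1$ colors --- possible since $c-1\le\binom{k-1}{2}$ --- and repeatedly duplicating a vertex), with the $\binom{t}{c-1}$ copies of $K_{k-2}$ pairwise disjoint and $I$ common to all. Then for any nonedge $uv$ in $I$ and any color $i$, choosing $S\not\ni i$ yields a $K_k$ with exactly $|S|+1=c$ colors; this works even when $t=c$ because some $(c-1)$-subset always avoids $i$. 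Remaining non-edges outside $I$ are handled by greedily adding permissible colored edges until saturation. If you want to salvage your approach, you need a core in which $u$ and $v$ have \emph{many} common $(k-2)$-cliques realizing enough distinct $(c-1)$-color palettes to absorb every $i\in[t]$; that is the content of the paper's construction, not a routine case analysis.
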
 

\begin{proof}
For fixed $c\le {k-1\choose 2}+1$ and $t\ge c$, we construct a $\C_c(K_k)$-saturated graph with $O(n)$ edges.  As we are not interested in determining the relevant saturation number exactly, we make no effort to optimize the number of edges in our construction.

Assume that $n$ is sufficiently large, and consider an edge-coloring of $H'=K_{k-1}$ using exactly $c-1$ colors. Create an edge-colored copy of $K_k-e$ by choosing some vertex $v$ in $H'$, adding a new vertex $v'$, and connecting $v'$ to $V(H')-\{v\}$ such that $vx$ and $v'x$ have the same colors for each $x\in V(H')-\{v\}$.  Repeat the duplication of $v$ to create $H_{S,p}=K_{k-2}\vee I$, where $I$ is an independent set of size $p$ and $S$ is the set of colors appearing on $E(H_{S,p})$.  Note that $H_{S,p}$ contains no copy of $K_{k}$, but the addition of any edge to $H_{S,p}$ in a color from $[t]-S$ creates a copy of $K_k$ with exactly $c$ colors. 

We create the edge-colored graph $G'$ by taking the union of the graphs $H_{S,p}$ with $p=n-{t\choose c-1}(k-2)$ for each of the ${t\choose c-1}$ choices of $S$, under the assumption that $I$ is common to each such graph. Note that for any $u$ and $v$ in $I$ and any color ${c_0}\in[t]$, adding $uv$ in color ${c_0}$ to $G'$ creates a copy of $K_k$ with exactly $c$ colors within $H_{S, p}$ for any $S$ that does not contain ${c_0}$.  To create the desired saturated graph $G$, iteratively add edges to $G'-I$ in any permissible color until either $G-I$ is complete, or no colored edge can be added to $G-I$ without creating an element of $\C_c(K_k)$.  In either case, $G$ is $\C_c(K_k)$-saturated and has at most $${t\choose c-1}(k-2)\left(n-{t\choose c-1}(k-2)\right)+{{t\choose c-1}(k-2)\choose 2}$$ edges, which is $O(n)$ edges as desired.  
\end{proof}

Theorem \ref{thm:clique_asymp} now follows as a consequence of Corollary \ref{cor:nlogn} and Theorem \ref{thm:linear_clique}.

\section{2-Colored Triangles}

In this section, we prove the following exact result.  

\begin{theorem}\label{thm:2-colored-triangles}
If $t = 2$ and $n\ge 11$ or if $t\ge 3$ and $n\ge 9$, then $$\sat_t(n,\C_2(K_3))=2n-4.$$ Furthermore, if $t\ge 3$, then $K_{2,(n-2)}$ is the unique saturated graph.
\end{theorem}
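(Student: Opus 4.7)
My plan is to prove the upper and lower bounds separately and extract the uniqueness statement from tightness in the lower bound.

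For the upper bound I will exhibit an explicit $(\C_2(K_3),t)$-saturated coloring of $K_{2,n-2}$. Let $u,v$ denote the two vertices of the smaller part and $I$ the other part. Color every edge $uw$ with color~$1$ and every edge $vw$ with color~$2$, except that when $t\ge 3$ I recolor the single edge $vw_0$ to color~$1$ for a distinguished vertex $w_0\in I$. The graph is triangle-free and so contains no $2$-colored triangle; conversely, for every non-edge $e$ and every color $c\in[t]$, a routine verification produces a triangle of $G+e_c$ with exactly two colors. The vertex $w_0$ is present precisely to handle the case of adding $uv$ in a color $c\notin\{1,2\}$, where the triangle $uvw_0$ has colors $\{c,1,1\}$. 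The edge count is $2(n-2)=2n-4$.

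For the lower bound, let $G$ be $(\C_2(K_3),t)$-saturated. The key structural lemma, valid for $t\ge 3$, is that every non-adjacent pair $\{u,v\}$ has at least two common neighbors. This follows from a short case check on a hypothetical unique common neighbor $w$: if the two edges $uw, vw$ share a color $\gamma$, then adding $uv$ in color $\gamma$ produces a monochromatic triangle; if they have distinct colors $a\ne b$, then adding $uv$ in any color $c\notin\{a,b\}$ (which exists when $t\ge 3$) produces a rainbow triangle---either way, the resulting triangle is not $2$-colored and $G$ is not saturated. Fixing any vertex $v$ and setting $A=N(v)$ and $B=V(G)\setminus N[v]$, every $b\in B$ thus has at least two neighbors in $A$, so $e(A,B)\ge 2|B|$ and hence
\[
e(G)\;=\;d(v)+e(A)+e(A,B)+e(B)\;\ge\;2n-2-d(v)+e(A)+e(B).
\]
A case analysis on $\delta:=\delta(G)$ then completes the $t\ge 3$ argument. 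The structural lemma excludes $\delta\in\{0,1\}$. For $\delta=2$ the displayed bound already gives $e(G)\ge 2n-4$; tracing equality forces $e(A)=e(B)=0$ together with every $b\in B$ being adjacent to both vertices of $A$, which is exactly $K_{2,n-2}$ with parts $A$ and $B\cup\{v\}$ (the uniqueness statement). For $\delta=3$, combining $e(A,B)\ge 2|B|$ with $e(A,B)+2e(B)\ge 3|B|$ (from $\sum_{b\in B}d(b)\ge 3|B|$) yields $e(G)\ge (5n-14)/2$, which strictly exceeds $2n-4$ for $n\ge 7$; for $\delta\ge 4$ the trivial $2e(G)\ge\delta n$ gives $e(G)\ge 2n$.

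The main obstacle is the $t=2$ case: the structural lemma weakens because a single heterochromatic common neighbor can already handle both $c=1$ and $c=2$, so the automatic bound drops to $e(A,B)\ge |B|$ and the counting becomes $e(G)\ge n-1+e(A)+e(B)$. Here I plan to recover the desired bound by exploiting the fact that $t=2$ forbids rainbow triangles, so every triangle of $G$ must be monochromatic---a strong constraint that, combined with the saturation condition at the remaining non-edges, forces additional edges in the cases $\delta\in\{1,2\}$ and ultimately yields $e(G)\ge 2n-4$. Several distinct extremal graphs arise in this regime (a degree-$1$ pendant attached to a near-complete core, and variants), which explains both the loss of uniqueness at $t=2$ and the slightly larger threshold $n\ge 11$ required to rule out small-$n$ boundary configurations.
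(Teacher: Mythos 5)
Your $t\ge 3$ argument is correct and takes a genuinely different route from the paper's. Both proofs start from the same structural fact (when $t\ge 3$, every non-edge needs at least two common neighbors, since a single common neighbor leaves either a monochromatic or a rainbow completion available), but the paper converts this into the global inequality $\sum_v \binom{d(v)}{2}\ge 2\bigl(\binom{n}{2}-e(G)\bigr)$, bounds the left side by a convexity lemma under the assumption $\sum_v d(v)\le 4n-10$, and then must finish uniqueness by eliminating three residual degree sequences one at a time. Your local decomposition $e(G)=d(v)+e(A)+e(A,B)+e(B)$ with $e(A,B)\ge 2|B|$ delivers the bound and the uniqueness of $K_{2,n-2}$ in a single stroke in the $\delta=2$ case, and your $\delta=3$ and $\delta\ge 4$ computations check out (indeed they already work for $n\ge 7$). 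The upper-bound construction is essentially the paper's.

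The gap is the $t=2$ case, which you have only outlined, and the outline as written would not close. Two concrete problems. First, you locate the difficulty in $\delta\in\{1,2\}$, but $\delta=3$ is also unresolved for $t=2$: a non-edge now needs only one common neighbor (if its two edges have different colors, adding the missing edge in either color already yields a $2$-colored triangle), so your inequality weakens to $e(A,B)\ge |B|$, and combining this with $e(A,B)+2e(B)\ge 3|B|$ gives only $e(G)\ge 3+2|B|=2n-5$, one edge short. The paper closes this subcase with a separate path-counting/convexity argument (comparing $\sum_v\binom{d(v)}{2}\ge \binom{n}{2}-e(G)$ against an upper bound determined by the degree sequence), and this is exactly where the hypothesis $n\ge 11$ is consumed; your sketch gives no indication of where that threshold would come from. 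Second, for $\delta\in\{1,2\}$ the sentence ``monochromatic triangles plus saturation force additional edges'' is a plan, not a proof; in the paper this is the longest part of the argument, requiring a case analysis on whether the two neighbors of a minimum-degree vertex are adjacent, on the colors of the two incident edges, on the connectivity of the set of vertices seeing exactly one of those neighbors, and a final contradiction ruling out $2n-5$ edges. Until an analysis of comparable substance is carried out, the $t=2$ half of the theorem remains unproved.
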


Before we proceed, we require a simple technical lemma.   

\begin{lemma}
\label{convexity}
Let $x_1, x_2,\dots,x_t$ be  integers with $x_1 \ge x_2 \ge \dots \ge x_t\geq 2$. For $1 \le p < q \le t$ and let $x_i' := x_i$ for $i \not\in \{p,q\}$, $x_p' := x_p+1$ and $x_q' = x_q-1$, then
\begin{displaymath}
\sum_{i=1}^t \binom{x_i}{2} < \sum_{i=1}^t \binom{x_i'}{2}.
\end{displaymath}
\end{lemma}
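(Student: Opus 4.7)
The plan is to compute the difference of the two sums directly, noting that only the $p$-th and $q$-th terms differ. Using the telescoping identity $\binom{x+1}{2} - \binom{x}{2} = x$, I would write
$$\sum_{i=1}^t \binom{x_i'}{2} - \sum_{i=1}^t \binom{x_i}{2} = \Bigl[\binom{x_p+1}{2} - \binom{x_p}{2}\Bigr] + \Bigl[\binom{x_q-1}{2} - \binom{x_q}{2}\Bigr] = x_p - (x_q - 1) = x_p - x_q + 1.$$
Since the $x_i$ are nonincreasing and $p < q$, we have $x_p \ge x_q$, so this difference is at least $1$, giving strict inequality.

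There is no real obstacle: the lemma is a one-line instance of strict discrete convexity of $x \mapsto \binom{x}{2}$, which says that spreading apart two coordinates of a vector (while preserving their sum) strictly increases the sum of $\binom{\cdot}{2}$. I would also remark in passing that the hypothesis $x_q \ge 2$ is not required for the inequality itself; it is presumably included so that $x_q' \ge 1$ remains meaningful in the intended application.
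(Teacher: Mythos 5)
Your proposal is correct and is essentially the paper's own argument: the paper likewise reduces the claim to comparing only the $p$-th and $q$-th terms and observes the inequality is equivalent to $x_p > x_q - 1$, which is exactly your computation that the difference equals $x_p - x_q + 1 \ge 1$. Your closing remark about the hypothesis $x_q \ge 2$ is a harmless aside and does not affect the correctness of the proof.
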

\begin{proof}
We must show that
\begin{displaymath}
\binom{x_p+1}{2} + \binom{x_q-1}{2} > \binom{x_p}{2}+\binom{x_q}{2},
\end{displaymath}
but this is equivalent to showing $x_p > x_q-1$, which holds by assumption.
\end{proof}

\begin{proposition}
\label{props}
For all $n \ge 11$,
\begin{displaymath}
\sat_2(n,\C_2(K_3))= 2n-4.
\end{displaymath}
\end{proposition}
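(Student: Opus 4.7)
The plan is to match $2n-4$ by an explicit construction for the upper bound and a minimum-degree case analysis for the lower bound.

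For the upper bound I would take $G=K_{2,n-2}$ with bipartition $\{a,b\}\cup\{v_1,\dots,v_{n-2}\}$ and color $c(av_i)=1$, $c(bv_i)=2$ for each $i$. Since $G$ is bipartite it has no triangle, hence no $2$-colored $K_3$. For a non-edge $v_iv_j$ added in color $c$, one of the two triangles $av_iv_j$ (with colors $1,1,c$) and $bv_iv_j$ (with colors $2,2,c$) is $2$-colored depending on $c$. For the non-edge $ab$ added in any color, the triangle $abv_i$ already has both colors $1,2$ on its existing edges, so it is $2$-colored. Thus $G$ is $\C_2(K_3)$-saturated with $2n-4$ edges.

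For the lower bound, let $G$ be $\C_2(K_3)$-saturated. First I would record the structural consequences: every triangle of $G$ is monochromatic, so for each vertex $v$ there is no $G$-edge joining $N_1(v)$ to $N_2(v)$ and every $G$-edge lying inside $N_c(v)$ has color $c$; and saturation forces $G$ to have diameter at most $2$ with the stronger color-sensitive property that for any non-edge $uw$ and each color $c$, there is a common neighbor $z$ making the triangle $uzw$ (after adding $uw$ in color $c$) $2$-colored. I would then pick a vertex $v$ of minimum degree $\delta$. The case $\delta\ge 4$ gives $e(G)\ge 2n$ directly. For $\delta=1$ the unique neighbor $w$ of $v$ must be universal (it is the only candidate common neighbor for any pair involving $v$), and a short cascade of color constraints through $w$ forces $G-v$ to be a monochromatic $K_{n-1}$, yielding far more than $2n-4$ edges when $n\ge 11$.

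The cases $\delta\in\{2,3\}$ are the core. I would let $N(v)=\{x,y\}$ (or $\{x,y,z\}$) and split on the color pattern of the edges from $v$ and on which edges exist inside $N(v)$; in each sub-case the saturation condition on non-edges $vu$ with $u\notin N[v]$ forces $u$ to be adjacent to a vertex of $N(v)$ with a prescribed color on that edge. Combined with $\delta\ge 2$ and the structural lemma, this gives $\sum_{w\in N(v)}d(w)$ large and $d(u)\ge 2$ for every $u\notin N[v]$, summing to $\sum_w d(w)\ge 4n-8$ and hence $e(G)\ge 2n-4$.

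The hard part will be the fine-grained color-pattern analysis for $\delta=2$, particularly when some $u\in V\setminus N[v]$ is adjacent to only one of $x,y$: such a $u$ creates additional non-edges whose saturation requirements propagate extra edges, and Lemma \ref{convexity} is needed to verify that among all degree sequences compatible with the structural constraints, the unique extremal configuration achieving $\sum_w d(w)=4n-8$ is $K_{2,n-2}$ with the coloring from the construction above.
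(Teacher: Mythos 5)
Your construction and overall case structure (splitting on the minimum degree $\delta$) match the paper, and the $\delta=1$ and $\delta\ge 4$ cases are fine. But the accounting you propose for the core cases provably does not reach the bound. For $\delta=2$ with $N(v)=\{x,y\}$: saturation forces every vertex outside $N[v]$ to have a neighbor in $\{x,y\}$, which gives $d(x)+d(y)\ge n-1$, and $\delta\ge 2$ contributes $2$ per remaining vertex, so your sum is only about $3n-5$, well short of $4n-8$. Writing $X$ for the set of vertices adjacent to exactly one of $x,y$, one needs roughly $|X|$ additional edges \emph{inside} $V\setminus N[v]$, and the ``second edge'' guaranteed by $d(u)\ge 2$ for $u\in X$ yields only about $|X|/2$ of them. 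The missing ingredient is that the subgraph induced on $X$ must be \emph{connected} (two components could otherwise be joined by a suitably colored edge without creating a $2$-colored triangle), which supplies $|X|-1$ internal edges; even then one only lands at $2n-5$, and a further color argument (locating a forced blue edge inside $X$ and a vertex of $Y$ adjacent to both $x$ and $y$, then exhibiting an addable edge) is needed to exclude equality at $2n-5$. None of this is in your sketch, and ``saturation requirements propagate extra edges'' does not substitute for it. A similar shortfall occurs for $\delta=3$, where the same accounting gives only $4n-10$; the paper instead handles $\delta=3$ globally by counting paths of length $2$ (every non-edge needs at least one) against the convexity bound of Lemma~\ref{convexity}, which is precisely where the hypothesis $n\ge 11$ enters.

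A second problem: your closing claim that the unique extremal configuration is $K_{2,n-2}$ is false for $t=2$ --- other $(\C_2(K_3),2)$-saturated graphs with $2n-4$ edges exist (for instance, configurations containing a monochromatic triangle arise in the $\delta=2$ subcase where $xy\in E(G)$, and the paper notes ``other constructions exist''). Uniqueness is only asserted for $t\ge 3$. Aiming the endgame at this uniqueness statement would derail the argument; for the $t=2$ lower bound you need only that every saturated graph has at least $2n-4$ edges, not a classification of the equality cases.
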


\begin{proof}
Consider the edge-colored graph $K_{2,(n-2)}$ where $x$ and $y$ are the vertices in the partite class of size 2 and all edges incident with $x$ are red while all edges incident with $y$ are blue. This shows that $\sat_2(n,\C_2(K_3)) \le 2n-4$ (other constructions exist).  We will show $\sat_2(n,\C_2(K_3)) \ge 2n-4$.

Suppose $G$ is an $n$-vertex $(\C_2(K_3),2)$-saturated graph.  We will consider cases based on the minimum degree of $G$.

\textit{Case 1:} $\delta(G)=1$. \\
Let $u \in V(G)$ be of degree 1 with neighbor $v$.  Say that the color of $uv$ is blue.  Then, $v$ must be adjacent to every other vertex $w$ in $G$ by a red edge for otherwise we could add the blue edge $uw$.  There is no blue edge in $G - \{u,v\}$ for this would yield a $\C_2(K_3)$. Thus, $G- \{u,v\}$ is a complete graph consisting of only red edges. Thus $e(G) \ge 2n-3$.

\textit{Case 2:} $\delta(G)=2$.\\
Let $u$ be a vertex with degree $2$ and suppose $v_1$ and $v_2$ are the neighbors of $u$.  If there is a vertex $y$ which is not adjacent to $u$, $v_1$, or $v_2$, then the graph would not be saturated since we could add the edge $uy$ in either color and not obtain a triangle. So every vertex is adjacent to $u$, $v_1$, or $v_2$.  

 Suppose first that $v_1v_2$ is an edge in $G$.  Then, $v_1,v_2$ and $u$ form a triangle and so all 3 edges are the same color, say blue. The only common neighbor of $v_1$ and $v_2$ is $u$ since any further neighbor $w$ would be connected to $v_1$ and $v_2$ by blue edges. But then the graph would not be saturated since we could add the edge $uw$ in blue. Now $X:=V(G)- \{u,v_1,v_2\}$ is the set of vertex which are neighbors of $v_1$ or $v_2$. Note that every edge from $\{v_1,v_2\}$ to a vertex $x \in X$ is red, for otherwise we could add the blue edge $xu$.  The graph induced on the vertex set $X$ is connected since two connected components could be connected by a red edge without forming a $\C_2(K_3)$.  Thus, we have at least $3 + \abs{X} + \abs{X}-1 = 2n-4$ edges.   From now on we may assume that $v_1v_2$ is not an edge.

First, suppose that both $uv_1$ and $uv_2$ are blue. Let $X$ denote the set of vertices in $V(G) - \{u,v_1,v_2\}$ which are connected to exactly one of $v_1$ and $v_2$, and let $Y$ denote the set of vertices in $V(G) - \{u,v_1,v_2\}$ which are connected to both $v_1$ and $v_2$. So $X\cup Y = V(G) - \{u,v_1,v_2\}$. All edges from $X$ to $\{v_1,v_2\}$ are red, and at least one edge from every vertex of $Y$ to $\{v_1,v_2\}$ is red. If $X = \varnothing$ we are done, so suppose there is at least one vertex in $X$.  The graph induced on the set $X$ must be connected for otherwise we could add a red edge connecting two components.  Thus, there are at least $\abs{X}-1$ edges in this induced subgraph, yielding a total of at least $2 + 2\abs{Y} + \abs{X} + \abs{X}-1 = 2n - 5$ edges.  We now suppose that these $2n-5$ edges are the only edges in $G$ and argue to a contradiction.  

 Let $N_1$ and $N_2$ denote the neighborhoods of $v_1$ and $v_2$ in $X$ respectively. Since there are no edges with one endpoint in $X$ and the other in $Y$, there must be a blue edge in the induced subgraph on $X$ for otherwise we could add a red edge from a vertex in $X$ to either $v_1$ or $v_2$, the one it is not connected to. This blue edge has one vertex in $N_1$ and one vertex in $N_2$, say $w_1$ and $w_2$, respectively. Observe that $Y$ must be nonempty for otherwise we could add the blue edge $v_1v_2$.  Let $z \in Y$ and assume $v_1z$ is red. Then we could add the edge $w_1z$ with color red. Thus,  the graph $G$ could not be saturated with $2n-5$ edges and so contains at least $2n-4$.

Second suppose $uv_1$ is blue and $uv_2$ is red. Define $X$ and $Y$ as in the previous subcase.  Again assume $X$ is nonempty for otherwise we have $2n-4$ edges. But now $Y$ may be empty.  Edges from $v_1$ to $X$ are red, and edges from $v_2$ to $X$ are blue. Again, the graph induced on $X$ is connected.  Thus, again we have at least $2n-5$ edges. Now suppose that there are no other edges in $G$ and we will argue to a contradiction. 
If $N_2$ was empty, then for $w\in N_1$, we could add the edge $wv_2$ and not create a triangle. Since $N_1, N_2 \neq \varnothing$ and the graph induced on $X$ is connected, there is an edge $w_1w_2$ with $w_1 \in N_1$ and $w_2 \in N_2$. Suppose $w_1w_2$ is red (the blue case is similar), then we can add the edge $v_1w_2$ with color red.  Thus $2n-5$ edges do not suffice for $G$. Thus $G$ must have at least $2n-4$ edges.

\textit{Case 3:} $\delta(G)=3$.\\
We need to show that $\sum_v{d(v)} \ge 4n-8$. Suppose by way of contradiction that $\sum_v{d(v)} \le 4n-10$.  For every edge which is not in $G$, there must exist a path of length 2 between its endpoints.    It follows that
\begin{equation}
\label{constraint1}
\sum_{v \in V(G)} \binom{d(v)}{2} \ge \binom{n}{2} - \frac{1}{2}\sum_{v \in V(G)}{d(v)} \ge  \binom{n}{2} - \frac{1}{2}(4n-10) = \frac{n^2}{2}- \frac{5n}{2} +5.
\end{equation}
On the other hand, by Lemma~\ref{convexity} we have
\begin{equation}
\label{constraint2}
\sum_{v \in V(G)} \binom{d(v)}{2} \le \binom{n-7}{2} + (n-1)\binom{3}{2} = \frac{n^2}{2} - \frac{9n}{2} + 25.
\end{equation} 
It follows from \eqref{constraint1} and \eqref{constraint2} that $n \le 10$, a contradiction.
\end{proof}

\begin{proposition}
\label{propt}
For all $t \ge 3$ and $n \ge 9$,
\begin{displaymath}
\sat_t(n,\C_2(K_3)) = 2n-4.
\end{displaymath}
Moreover, every $(\C_2(K_3),t)$-saturated graph is a coloring of $K_{2,n-2}$.
\end{proposition}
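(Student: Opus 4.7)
My plan is to mirror the case analysis of Proposition \ref{props} while exploiting the significantly stronger saturation requirement available when $t \ge 3$. The central new observation is that for $t \ge 3$, a single cherry $uxv$ with edge colors $(a,b)$ handles the saturation of the non-edge $uv$ for at most $t-1$ added colors (when $a = b$) or exactly $2$ (when $a \ne b$); in either case strictly fewer than $t$. Thus every non-edge is subtended by at least two distinct cherries, strengthening the cherry bound used for $t = 2$ to
\[\sum_v \binom{d(v)}{2} \ge 2\left(\binom{n}{2} - e(G)\right).\]

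For the upper bound I would color $K_{2,n-2}$ with partite sets $\{x,y\}$ and $W = \{w_1, \ldots, w_{n-2}\}$ by assigning color $1$ to every edge $xw_i$ and to the edge $yw_1$, and color $2$ to every remaining $yw_i$. Since $K_{2,n-2}$ is triangle-free it contains no $\C_2(K_3)$, and a short case check on the two kinds of non-edges ($xy$ and $w_iw_j$) verifies that each addition in any color from $[t]$ completes some $2$-colored triangle, giving $\sat_t(n,\C_2(K_3)) \le 2n - 4$. For the matching lower bound and the uniqueness of $K_{2,n-2}$, I would case-split on $\delta(G)$. The case $\delta(G) = 1$ is immediately impossible, because the unique cherry through a non-edge at a pendant vertex cannot cover all $t$ colors. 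The case $\delta(G) = 2$ with $N(u) = \{v_1,v_2\}$ and $v_1v_2 \in E(G)$ forces, via the same single-cherry obstruction at every non-edge $uw$, that $v_1$ and $v_2$ are adjacent to every other vertex, so $e(G) \ge 2n - 3 > 2n - 4$. The case $\delta(G) \ge 3$ is excluded by feeding the sharpened cherry bound into Lemma \ref{convexity} just as in Proposition \ref{props} but with the doubled right-hand side; the resulting inequality fails for $n \ge 7$, so the hypothesis $n \ge 9$ is comfortable.

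This leaves $\delta(G) = 2$ with $v_1v_2 \notin E(G)$. The two-cherry requirement for each non-edge $uw$ forces every $w \in W := V(G) \setminus \{u,v_1,v_2\}$ to be adjacent to both $v_1$ and $v_2$, giving $d(v_1) = d(v_2) = n-2$ and $d(w) \ge 2$. The degree-sum identity
\[2e(G) = d(u) + d(v_1) + d(v_2) + \sum_{w \in W} d(w) \ge 2 + 2(n-2) + 2(n-3) = 4n - 8\]
then yields $e(G) \ge 2n - 4$, with equality iff $d(w) = 2$ for every $w \in W$---that is, iff each $w$ is adjacent only to $v_1$ and $v_2$ and there are no edges inside $W$, which says exactly that $G = K_{2,n-2}$ with partite classes $\{v_1,v_2\}$ and $\{u\} \cup W$. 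The main obstacle is verifying that the factor-$2$ improvement to the cherry bound is indeed enough to push the convexity argument for the case $\delta(G) \ge 3$ through for all $n \ge 9$ (versus $n \ge 11$ in the $t = 2$ case), and confirming that the case $\delta(G) = 2$ with $v_1v_2 \in E(G)$ really is forced to $e(G) \ge 2n-3$ as claimed.
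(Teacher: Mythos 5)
Your proposal is correct, and while it shares the paper's two central ingredients---the observation that for $t\ge 3$ every non-edge needs at least two paths of length $2$ between its endpoints (so $\sum_v\binom{d(v)}{2}\ge 2\bigl(\binom{n}{2}-e(G)\bigr)$), and essentially the same coloring of $K_{2,n-2}$ for the upper bound---it reaches the lower bound and the uniqueness statement by a genuinely different route. The paper pushes everything through the counting machinery: it first derives $\sum_v d(v)\ge 4n-8$ from the doubled cherry bound and Lemma~\ref{convexity}, then enumerates all degree sequences achieving equality (namely $(n-2,n-2,2,\dots,2)$, $(n-2,n-3,3,2,\dots,2)$, $(n-1,n-3,2,\dots,2)$, and $(n-1,n-4,3,2,\dots,2)$) and disposes of the three non-$K_{2,n-2}$ sequences by hand in Claim~\ref{casecheck}. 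You instead split on $\delta(G)$ and exploit the structural consequence of the two-cherry condition at a minimum-degree vertex: if $N(u)=\{v_1,v_2\}$, then every $2$-path from $u$ to a non-neighbor $w$ must pass through $v_1$ or $v_2$, so \emph{both} $v_1w$ and $v_2w$ must be edges; this instantly gives $d(v_1)=d(v_2)=n-2$ (or $n-1$ if $v_1v_2\in E(G)$), the bound $e(G)\ge 2n-4$, and the identification of the equality case as $K_{2,n-2}$, all in one stroke. The convexity argument is then needed only to exclude $\delta(G)\ge 3$, and your deferred computations do check out: with $\delta\ge 3$ and $\sum_v d(v)\le 4n-8$ the extremal sequence is $(n-5,3,\dots,3)$, giving $\sum_v\binom{d(v)}{2}\le\frac{1}{2}(n^2-5n+24)$, which falls short of the required $n^2-5n+8$ for all $n\ge 7$ (run the bound against $4n-8$ rather than $4n-10$ so that $\delta\ge3$ graphs are also excluded from the equality case). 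What each approach buys: yours avoids the degree-sequence enumeration and Claim~\ref{casecheck} entirely and is shorter and arguably more transparent about \emph{why} $K_{2,n-2}$ is forced; the paper's is more uniform, reusing the same counting template for bound and uniqueness alike and paralleling the $t=2$ proof more closely.
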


\begin{proof}[Proof of Proposition \ref{propt}]
A construction is given by the following. Take two vertices $u$ and $v$ and a collection of vertices $u_1,u_2,\dots,u_{n-2}$.  Take red edges from $u$ to $u_1,\dots,u_{n-2}$ and blue edges from $v$ to $u_1,\dots,u_{n-3}$ and a red edge from $v$ to $u_{n-2}$.  Therefore $\sat_t(n,\C_2(K_3)) \leq 2n-4.$

Now we will establish the lower bound. Let $G$ be an $n$-vertex $(\C_2(K_3),t)$-saturated graph with $t \ge 3$ with $e(G)$ as small as possible.   First, we show that the minimum degree of $G$ is at least 2.   Suppose $u\in V(G)$ is a vertex of degree 1 with neighbor $v$ and let $w$ be any other vertex.  If $vw$ is not an edge, then we can add $uw$ without creating a triangle.  If $vw$ is an edge with the same color as $uv$, then we may add $uw$ with the same color.  If $vw$ is an edge with a different color than $uv$, then we may add $vw$ with an arbitrary distinct third color (since $t \ge 3$). 

We need to show that $\sum_{v\in V(G)} d(v) \ge 4n-8$.  Observe that if $G$ is  $(\C_2(K_3),t)$-saturated, then for every edge $e=\{x,y\} \in E(\overline{G})$ there must be at least two paths of length 2 between $x$ and $y$.   Since the number of paths of length 2 in $G$ is $\sum_{v \in V(G)} \binom{d(v)}{2}$, it follows that

\begin{equation}
\label{main} 
\sum_{v \in V(G)} \binom{d(v)}{2} \ge 2 \left(\binom{n}{2} - e(G)\right) = n^2-n - \sum_{v\in V(G)} d(v).
\end{equation}

Under the assumption that $\sum_{v\in V(G)} d(v) \le 4n-10$, the right hand side of  \eqref{main} is at least $n^2-5n+10$. By Lemma \ref{convexity}, if $\sum_{v\in V(G)} d(v) \le 4n-10$, then 
\begin{displaymath}
\sum_{v \in V(G)}\binom{d(v)}{2} \le \binom{n-1}{2} + \binom{n-5}{2} + (n-2)\binom{2}{2} = n^2-6n+14.
\end{displaymath}
This is a contradiction for $n \ge 5$.

Since $\sum_{v\in V(G)} d(v)$ cannot be odd, it remains to show that if $G$ is $(\C_2(K_3),t)$-saturated with $\sum_{v\in V(G)} d(v) = 4n-8$, then $G$ is a coloring of $K_{2,n-2}$. In this case the right hand side of \eqref{main} is $n^2-5n+8$.  

 First, we observe that it is not possible for the maximum degree to be at most $n-3$, for then (by Lemma \ref{convexity}) we would have
 \begin{displaymath}
\sum_{v \in V(G)} \binom{d(v)}{2} \le \binom{n-3}{2} + \binom{n-3}{2} + \binom{4}{2}+ (n-3)  = n^2-6n+15,
\end{displaymath}
which is too small for $n \ge 8$.  

Suppose the maximum degree of $G$ is $n-2$.  We see that the second largest degree is at least $n-3$ for otherwise we have 
 \begin{displaymath}
\sum_{v \in V(G)} \binom{d(v)}{2} \le \binom{n-2}{2} + \binom{n-4}{2} + \binom{4}{2} + (n-3)  = n^2-6n+16,
\end{displaymath}
which is too small for $n \ge 9$.  Thus, the remaining possible degree sequences starting with $n-2$ are $(n-2,n-2,2,\dots,2)$ and $(n-2,n-3,3,2,\dots,2)$. Note that $(n-2,n-2,2,\dots,2)$ yields a colored $K_{2,n-2}$.

Suppose the maximum degree is $n-1$.  If the second largest degree is at most $n-5$, then we have
 \begin{displaymath}
\sum_{v \in V(G)} \binom{d(v)}{2} \le \binom{n-1}{2} + \binom{n-5}{2} + \binom{4}{2} + (n-3)  = n^2 - 6n +19,
\end{displaymath}
which is too small $n \ge 9$. The remaining possible degree sequences are $(n-1,n-4,3,2,\dots,2)$ and $(n-1,n-3,2,\dots,2)$.  To finish the proof we have to check the 3 degree sequences which satisfied \eqref{main}.  We do this in Claim \ref{casecheck} below.
\end{proof}

\begin{claim}
\label{casecheck}
For $t \ge 3$, there is no $(\C_2(K_3),t)$-saturated graph with any of the following degree sequences:
\begin{itemize}
\item $(n-2,n-3,3,2,\dots,2)$,
\item $(n-1,n-3,2,\dots,2)$,
\item $(n-1,n-4,3,2,\dots,2)$.
\end{itemize}
\end{claim}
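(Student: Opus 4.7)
The plan is to give a short unified argument that rules out all three listed degree sequences at once. The key structural lemma I would first establish is: in any $(\C_2(K_3),t)$-saturated graph $G$ with $t\geq 3$, every vertex of degree $2$ has \emph{both} of its neighbors of degree at least $n-2$. To prove this, let $x$ be a vertex of degree $2$ with $N(x)=\{p,q\}$, and consider any $y\in V(G)\setminus\{x,p,q\}$. The proof of Proposition~\ref{propt} already observes that the non-edge $xy$ must admit at least two paths of length $2$ between its endpoints, i.e., $|N(x)\cap N(y)|\geq 2$. Since $|N(x)|=2$, this forces $\{p,q\}\subseteq N(y)$. Letting $y$ range over all of $V(G)\setminus\{x,p,q\}$ shows that $p$ (and symmetrically $q$) is adjacent to $x$ and to every one of the $n-3$ vertices in $V(G)\setminus\{x,p,q\}$, giving $d(p)\geq(n-3)+1=n-2$.

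Next I would apply this lemma to each of the three sequences. In each case the largest entry is either $n-2$ or $n-1$, the second-largest is $n-3$ or $n-4$, and the only other nontrivial entry is $3$; since $n\geq 9$ we have $n-3,\,n-4,\,3<n-2$, so \emph{exactly one} vertex of $G$ has degree at least $n-2$. On the other hand, each sequence contains at least $n-3\geq 6$ vertices of degree $2$, and the lemma requires both neighbors of any such vertex to have degree $\geq n-2$. Since the two neighbors of a vertex are distinct, this demands at least two vertices of degree $\geq n-2$, contradicting the existence of only one. Hence no saturated graph with any of the three sequences can exist.

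I expect no significant obstacle: once the degree-$2$ lemma is in place, the three sequences are handled uniformly, with no need to examine the internal arrangement of edges, the placement of the non-edges incident to $v$ or the degree-$3$ vertex, or any specific color assignments. The only verification needed is the trivial arithmetic that in each sequence the second-largest degree is strictly less than $n-2$ for $n\geq 9$.
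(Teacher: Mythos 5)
Your proposal is correct, and it takes a genuinely different route from the paper. Both arguments rest on the same basic fact (which the paper records at the start of its proof of the claim): since $t\geq 3$, every non-edge of a $(\C_2(K_3),t)$-saturated graph must lie on at least two paths of length $2$. The paper then handles each degree sequence separately, reconstructing the essentially forced structure of any graph with that sequence (splitting into subcases according to whether the two high-degree vertices are adjacent, whether their non-neighborhoods intersect, etc.) and exhibiting in each configuration a concrete non-edge with only one path of length $2$. You instead extract a single clean consequence --- a degree-$2$ vertex $x$ with $N(x)=\{p,q\}$ forces every $y\notin\{x,p,q\}$ to be adjacent to both $p$ and $q$, so $d(p),d(q)\geq n-2$ --- and then dispatch all three sequences at once by counting: each sequence has at least one vertex of degree $2$ but only one entry that is at least $n-2$ (since $n-3$, $n-4$, and $3$ are all below $n-2$ for $n\geq 9$), whereas your lemma demands two distinct such vertices. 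Your argument is shorter, uniform across the three cases, and avoids the error-prone structural reconstruction; the paper's argument is more explicit about what the near-extremal graphs would have to look like, but buys nothing additional for the statement being proved. I see no gap in your reasoning.
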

\begin{proof}
First, it is important to note that if $G$ is $(\C_2(K_3),t)$-saturated, then for any $uv\not\in E(G)$, there must be at least two paths of length 2 from $u$ to $v$ because $t\geq 3$. 

Consider the degree sequence $(n-2,n-3,3,2,\dots,2)$ and let $x$ and $y$ be the vertices of degree $n-2$ and $n-3$, respectively. If $x$ and $y$ are not adjacent let $z$ be the other vertex $y$ is not adjacent to.  Then $z$ has degree at least 2 so it is adjacent to some other vertex.  This defines a unique graph up to isomorphism and it is clear that for the nonedge $yz$ there is only one path of length 2 from $y$ to $z$. 

Now, assume  $x$ and $y$ are adjacent. If there exists a common vertex $z$ such that $xz$ and $yz$ are both nonedges, then the remainder of the graph is forced.  Namely, there is an edge from $z$ to the other non-neighbor of $y$ and an edge from $z$ to one of the common neighbors of $x$ and $y$.  For the nonedge $yz$ there are not two paths of length 2 between $y$ and $z$.    Finally, assume $x$ and $y$ are adjacent and the set of non-neighbors of $x$ and the set of non-neighbors of $y$ are disjoint.  In this case there are multiple non-isomorphic graphs but a nonedge from one of the non-neighbor sets to a vertex in the set of common neighbors of degree 2 will not have two paths of length 2 between its endpoints.  (Indeed, any such path of length 2 would involve $x$ or $y$ but one of the vertices in the nonedge is a non-neighbor of $x$ or $y$.)

Consider the degree sequence $(n-1,n-3,2,\dots,2)$.  Let $x$ and $y$ be the degree $n-1$ and $n-3$ vertex, respectively. It is easy to see the two non-neighbors of $y$ must be adjacent and this defines a unique graph.  Then a nonedge from $y$ does not have two paths between its endpoints.

Consider, finally, the degree sequence   $(n-1,n-4,3,2,\dots,2)$. Let $x$ and $y$ be the degree $n-1$ and $n-4$ vertex, respectively.  Let $u,v$ and $w$ be the three non-neighbors of $y$.  Either these three vertices form a path of length 2, or we may assume $u$ and $v$ are adjacent and $w$ is adjacent to a common neighbor of $x$ and $y$.  In either case the nonedge $yu$ does not have two paths of length 2 between its vertices.  \end{proof}

\section{Conclusion}

In this paper, we consider several existing and new problems in the realm of edge-colored saturation problems.  There remain a number of potential directions of inquiry.  Even given the excellent results in \cite{GLP} and \cite{K}, the general problem of determining $\sat_t(n,\R(H))$ is open in a number of cases.  Of particular interest would be to determine the asymptotic behavior for general trees, or to consider the behavior of the function for disconnected graphs.  For instance, it is not difficult to show that if $p$ is even, $n\ge 5p$, and $t$ is large, then $\sat_t(n,\R((p+1)K_2))\le 5p$.  The extremal graph is a rainbow copy of $\frac{p}{2}K_5$ together with $n-\frac{5p}{2}$ isolated vertices. However, it seems surprisingly difficult to show that equality holds.  

We also point out that the families considered here, $\M(H), \R(H)$ and $\C_k(H)$ are invariant up to the permutation of the palette of $t$ colors.  What if this was not the case?  Suppose, for instance, that we wished to determine $\sat_3(n,\f)$, where $\f$ consisted of two graphs:  a triangle with two edges colored 1 and one edge colored 2, and a monochromatic triangle with all edges colored 3.  In this case, not all colored edges are created equal, opening the door to a number of (delightfully) aberrant possibilities.  

Finally, in \cite{HT}, Hanson and Toft also introduced the related problem of determining the saturation number of the family of graphs that are Ramsey-minimal for some $(H_1,\dots,H_t)$.  This is equivalent to determining the minimum number of edges in a graph of order $n$ that has a $t$-edge-coloring with no copy of $H_i$ in color $i$, such that the addition of any missing edge creates a graph wherein every $t$-edge-coloring contains some $H_i$ in color $i$.  While not our focus here, we want to highlight this general problem, which has only been considered for a limited collection of target graphs \cite{CFGMS,FKY,KS} and remains open.  

\section{Acknowledgements}
All authors were supported in part by NSF-DMS Grants \#1604458, \#1604773, \#1604697 and \#1603823, ``Collaborative
Research: Rocky Mountain - Great Plains Graduate Research Workshops in Combinatorics.'' Pfender is supported in part by NSF-DMS grant \#1600483. Smith is supported in part by NSF-DMS grant \#1344199. Tait is supported by NSF-DMS grant \#1606350. Tompkins is supported by NKFIH grant \#K116769.  Ferrara is supported in part by Simons Foundation Collaboration Grant \#426971.

\end{document}